\newcommand{\R}{\mathbf{R}}
\newcommand{\Z}{\mathbf{Z}}
\newcommand{\N}{\mathbf{N}}
\newcommand{\E}{\mathbf{E}}
\newcommand{\abs}[1]{\lvert #1 \rvert}
\newcommand{\norm}[1]{\lVert #1 \rVert}
\newtheorem{thm}{Theorem}
\newtheorem{lem}[thm]{Lemma}
\newtheorem{prop}[thm]{Proposition}
\theoremstyle{definition}
\theoremstyle{remark}
\newcommand{\Span}{\operatorname{Span}}
\newcommand{\rank}{\operatorname{rank}}
\begin{document}

\title{Spherical configurations over finite fields}
\author{Neil Lyall, \'{A}kos Magyar, Hans Parshall}
\thanks{The first author was partially supported by NSF-DMS grant 1702411 and Simons Foundation Collaboration Grant for Mathematicians 245792. The second author was partially supported by Grants NSF-DMS 1600840 and  ERC-AdG 321104.}

\address{Department of Mathematics, The University of Georgia, Athens, GA 30602, USA}
\email{lyall@math.uga.edu}
\address{Department of Mathematics, The University of Georgia, Athens, GA 30602, USA}
\email{magyar@math.uga.edu}
\address{Department of Mathematics, The Ohio State University, Columbus, OH 43210, USA}
\email{parshall.6@osu.edu}

\subjclass[2010]{11B30}
%\keywords{}

\begin{abstract}
%The aim of this article is to show that an analogue of Graham's conjecture holds in finite field geometries for 4-point spherical configurations spanning two dimensions, and more generally for
We establish that if $d \geq 2k + 6$ and $q$ is odd and sufficiently large with respect to $\alpha \in (0,1)$, then every set $A\subseteq \mathbf{F}_q^d$ of size $|A| \geq \alpha q^d$ will contain an isometric copy of every spherical $(k+2)$-point configuration that spans $k$ dimensions.
\end{abstract}
\maketitle

\setlength{\parskip}{3pt}

\section{Introduction}

Geometric Ramsey theory has its origins in series of papers by Erd\H{o}s et al. \cite{egmrss,egmrss2,egmrss3}, where they studied geometric configurations which cannot be destroyed by partitioning Euclidean space into finitely many classes. The fundamental problem is to classify those finite sets $X$ which are \emph{Ramsey}, in the sense that for every number of colors $r\in\N$ there is a dimension $d = d(r,X)$ for which every $r$-coloring of $\R^d$ contains a monochromatic, congruent copy of $X$. 

The simplest example of a Ramsey set is a regular $k$-simplex; that is, $k + 1$ equidistant points.  Indeed, for any dimension $d \geq kr$, any $r$-coloring of a regular $d$-simplex contains $k + 1$ points of the same color, forming a monochromatic regular $k$-simplex. On the other hand, a simple construction using the geometry of the Euclidean metric shows that any set of three collinear points is not Ramsey.  In fact, Erd\H{o}s et al. \cite{egmrss} showed that every Ramsey set must be \emph{spherical}; that is, contained in some sphere. This has led to the conjecture by Graham \cite{graham94} that a finite set $X$ is Ramsey if and only if it is spherical.

This conjecture is far from settled.  Examples of sets known to be Ramsey include vertices of ``bricks'' ($k$-dimensional rectangles) \cite{egmrss}, non-degenerate simplices \cite{frankl90}, trapezoids \cite{kriz92}, regular polygons and regular polyhedra~\cite{kriz91}. Common to many of these results was the exploitation of additional symmetries of the configuration. It was observed by Leader, Russell and Walters \cite{leader12} that all known examples of Ramsey sets are \emph{subtransitive} in the sense that they can be embedded in a higher dimensional set on which the rotation group acts transitively.  They introduced a rival conjecture that a finite set $X$ is Ramsey if and only if it is subtransitive, and further showed \cite{leader11} that almost all 4-point subsets of a circle are not subtransitive.  This was later extended by Eberhard \cite{eberhard13} to show that almost all $(k + 2)$-point sets on the $(k - 1)$-sphere are not subtransitive.  It remains an open question whether or not such configurations are Ramsey.

The aim of this article is to show that an analogue of Graham's conjecture holds in finite field geometries for 4-point spherical configurations spanning two dimensions, and more generally for spherical $(k+2)$-point configurations spanning $k$ dimensions.  We in fact prove a stronger \emph{density} version; namely that if $d \geq 2k + 6$ and $q$ is taken to be odd and sufficiently large with respect to $\alpha \in (0,1)$, then every set $A\subseteq \mathbf{F}_q^d$ of size $|A| \geq \alpha q^d$ contains an isometric copy of every such configuration $X$.  To be clear, here we say that two sets $X$ and $X'$ are \emph{isometric} if there is a bijection $\phi:X\to X'$ such that $|\phi(x)-\phi(x')|^2=|x-x'|^2$ for all $x \in X$ and $x'\in X'$, where $|x|^2=x\cdot x$ is the usual dot product of the vector $x\in \mathbf{F}_q^d$ with itself.

Our approach takes the point of view modern arithmetic combinatorics which has been very successful in the study of linear patterns in subsets of $\Z$ of positive density \cite{gowers98, gowers2001new}. In fact, one of the main purpose of this article is to extend these techniques to the setting of geometric Ramsey theory, where one counts configurations determined by both linear as well as certain non-linear relations, i.e. isometries.

The setting of vector spaces over finite fields provides a useful model to study many problems in arithmetic combinatorics; see especially the surveys \cite{green05,shparlinski13, wolf15}. In the context of geometric Ramsey theory over finite fields, notable results have been obtained by a number of authors \cite{chapman12,hart-iosevich,iosevich-rudnev,ff-simplices}. However, those results concern patterns consisting of points in general position, with no linear relations between them, and hence are fundamentally different.

Analogous results for simplices in Euclidean spaces and the integer lattice have been given in \cite{bourgain,magyar}, and it reasonable to expect that our approach here may be successfully adapted to these settings.  In the context of geometric (density) Ramsey theory in $\R^d$, some results using this approach were recently
obtained in \cite{cook-magyar-pramanik, lyall-magyar}. We hope to address further adaptation in the near future.

\subsection{Outline of paper}
The main results of the paper are stated is \cref{MainResults} below, and some preliminaries and reductions are presented in Sections \ref{preliminaries} and \ref{reductionSection}.

A key observation of the paper, see \cref{gvn} in \cref{TwoProps}, is that the count of isometric copies of a fixed configuration $X$ along bounded functions is controlled by a certain uniformity norm.  This norm, which we introduce in \cref{TwoProps}, measures the uniformity or randomness of a function along geometric rectangles, and it should be compared with the so-called $U^2$-uniformity norm of Gowers \cite{gowers98} which measures uniformity along combinatorial rectangles.  If a set $A$ is sufficiently uniform with respect to this norm, then it quickly follows from \cref{gvn} that $A$ contains many, in fact the statistically correct number of, isometric copies of $X$. The proof of \cref{gvn} is presented in \cref{gvnproofSection}.

In order to handle arbitrary sets $A$ we prove an \emph{inverse theorem}, see \cref{inverse} in \cref{inverseSection}, to establish that functions with large uniformity norm correlate with structured sets. Given such an inverse theorem there are then various standard  iterative procedures that one may hope to adapt to this setting to complete the argument. We follow an \emph{energy increment} route which to leads a so-called \emph{arithmetic regularity lemma}, namely \cref{decomposition}, that allows us to decompose the indicator function of $A$ as $1_A=f_1+f_2+f_3$, where $f_1$ is highly structured, $f_2$ has small $L^2$ norm and $f_3$ has small uniformity norm.  The proof of \cref{decomposition} is presented in \cref{last}.

In Sections \ref{5.1} and \ref{5.2} we demonstrate how \cref{decomposition} leads to a proof of our main results.  This consists of counting the isometric copies of $X$ along the main term $f_1$ and showing the contribution of the functions $f_2$ and $f_3$ are negligible. The setback of this approach is that it leads to very weak bounds, in fact the dependence of $q$ on $\alpha$ is tower-exponential. It seems quite possible that one could instead proceed via a density increment argument and obtain better, exponential type bounds, but we do not pursue this here.

We conclude the paper with an appendix in which we discuss the necessity of the spherical condition in the statement of our main result.

\section{Main results}\label{MainResults}

We will always work with a finite field $\mathbf{F}_q$ of odd characteristic.  For vectors $v,w \in \mathbf{F}_q^d$, we define their dot product $v \cdot w := \sum_{j = 1}^d v_jw_j$ as usual and we will work with the isotropic measurements of \emph{length} $|v|^2 := v \cdot v$ and \emph{distance} $|v - w|^2$.  For any $u \in \mathbf{F}_q^d$ and $\lambda \in \mathbf{F}_q$, we define the \emph{sphere} $S_\lambda(u) = \{x \in \mathbf{F}_q^d : |x - u|^2 = \lambda\}$, and we will simply write $S_\lambda$ when $u = 0$.  For $k \in \mathbf{N}$, we will say that $X \subseteq \mathbf{F}_q^d$ \emph{spans $k$ dimensions} when $\dim(\Span(X - X)) = k$, and we will call $X$ \emph{spherical} provided exists a sphere $S_\lambda(u) \subseteq \mathbf{F}_q^d$ with $X \subseteq S_\lambda(u)$.  Note the $(k + 1)$-point spherical configurations spanning $k$ dimensions are exactly the $k$-simplices, which were shown in \cite{ff-simplices} to appear as isometric copies in sufficiently dense subsets of $\mathbf{F}_q^d$ provided $d > k$.

In Euclidean spaces it is easy to see that if the finite sets $X$ and $X'$ are isometric, then $X'=z+U(X)$
for some vector $z$ and orthogonal transformation $U\subseteq O(d)$, i.e. $X'$ can be obtained from $X$ by a rigid motion.
The same may not hold in finite field geometries due to the presence of self-orthogonal vectors $x$ for which $|x|^2=0$.
However, it follows from Witt's extension theorem \cite{witt37} that if the subspace $V:=\Span(X-X)$ is non-degenerate in
the sense that $V\cap V^\perp =\{0\}$, then isometric copies of $X$ are indeed obtained by rigid motions. We use this fact
in the appendix to construct dense subsets avoiding isometric copies of non-spherical sets, establishing the necessity of
restricting our attention to spherical configurations $X$.

The main result of this paper is the following.

\begin{thm}\label{maintheorem}
	Let $d,k \in \mathbf{N}$ with $d \geq 2k + 6$, $\alpha \in (0,1)$, and $q \geq q(\alpha,k)$.  If $A \subseteq \mathbf{F}_q^d$ with $|A| \geq \alpha q^d$, then $A$ contains at least $c(\alpha,k)q^{(k + 1)d - k(k + 1)/2}$ isometric copies of every $(k + 2)$-point spherical configuration spanning $k$ dimensions.
\end{thm}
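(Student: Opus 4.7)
The plan is to derive \cref{maintheorem} by combining the arithmetic regularity lemma \cref{decomposition} with the generalized von Neumann inequality \cref{gvn}. Writing the target configuration as $X = \{x_0, x_1, \ldots, x_{k+1}\}$, the count of isometric copies of $X$ inside $A$ may be expressed as a $(k+2)$-linear expression
\[
N(X, A) = \sum_{\phi} \prod_{i=0}^{k+1} 1_A(\phi(x_i)),
\]
where $\phi$ ranges over isometric embeddings $X \hookrightarrow \mathbf{F}_q^d$. The total number of such embeddings is on the order of $q^{(k+1)d - k(k+1)/2}$, so for a random set of density $\alpha$ one expects $N(X, A) \approx \alpha^{k+2} q^{(k+1)d - k(k+1)/2}$, and the goal is to establish a matching lower bound up to a constant $c(\alpha, k)$.

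For $\eta = \eta(\alpha, k)$ to be chosen, I would apply \cref{decomposition} to write $1_A = f_1 + f_2 + f_3$, with $f_1$ a bounded structured function of bounded complexity, $\|f_2\|_2$ at most $\eta$ (under the normalized inner product) and $f_3$ having uniformity norm at most $\eta$. Substituting into the expression for $N(X, A)$ and expanding by multilinearity produces $3^{k+2}$ terms, each a multilinear average of functions drawn from $\{f_1, f_2, f_3\}$. These are handled by three separate arguments: any term containing a factor of $f_3$ is controlled directly by \cref{gvn}, yielding a contribution of size at most $C \eta \, q^{(k+1)d - k(k+1)/2}$; any term containing a factor of $f_2$ but no $f_3$ is treated by Cauchy--Schwarz arguments across the factors, converting the estimate into $\|f_2\|_2$ multiplied by a count of partial embeddings whose size — using the dimensional hypothesis $d \geq 2k + 6$ — is of the correct order $q^{(k+1)d - k(k+1)/2}$. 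Taking $\eta$ sufficiently small in terms of $\alpha$ and $k$ then renders all of these error terms negligible compared to the expected main term.

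The main obstacle, and the most delicate part of the argument, is obtaining a positive lower bound for the structured main term $\sum_\phi \prod_i f_1(\phi(x_i))$. Here one must exploit the explicit form of $f_1$ provided by \cref{decomposition}, which, in the manner typical of arithmetic regularity lemmas, will express $f_1$ as a bounded-complexity average over certain structured level sets — in this setting, likely conditional expectations on a sigma-algebra generated by a bounded number of spherical slices. The spherical hypothesis on $X$ enters essentially at this step: it ensures that the distance constraints cutting out an isometric copy are compatible with the spherical level sets defining $f_1$, so that the count does not collapse. Combined with Fourier-analytic control of the spherical measures $S_\lambda(u)$ on $\mathbf{F}_q^d$ — where the assumption $d \geq 2k + 6$ provides sufficient ambient room for near-independence among the $\binom{k+2}{2}$ spheres appearing in the count — this should produce a lower bound of order $c(\alpha, k) q^{(k+1)d - k(k+1)/2}$. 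Together with the error estimates above, this concludes the theorem.
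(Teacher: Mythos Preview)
Your high-level strategy (arithmetic regularity plus a generalized von Neumann inequality) is indeed the engine behind the result, but your outline diverges from the paper's actual route and contains a real gap in the main-term analysis.

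\textbf{Structural difference.} The paper does not apply \cref{decomposition} and \cref{gvn} directly to $A\subseteq\mathbf{F}_q^d$. Both of those propositions are formulated relative to a fixed sphere $S=S_\lambda$: the counting operator $\mathcal{N}_X$ counts copies inside $S$, and the uniformity norm is $U^2_\perp(S)$. The paper therefore first proves the spherical version \cref{spheretheorem} and then deduces \cref{maintheorem} by a short averaging argument: the spheres $S_\lambda(u)$ uniformly cover $\mathbf{F}_q^d$, so a positive proportion of centers $u$ have $|A\cap S_\lambda(u)|\geq(\alpha/4)q^{d-1}$; apply \cref{spheretheorem} on each such sphere, and use \cref{sphericalcondition} together with \cref{mainAsymptotic} to control the overcount coming from one copy lying on many spheres. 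Your proposal bypasses this reduction, so invoking \cref{decomposition} and \cref{gvn} ``as stated'' is not literally available to you.

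\textbf{Genuine gap.} More seriously, your sketch of the main term does not work as written. In the paper, $f_1=\nu_B*(1_A\sigma)$ is a Bohr-set convolution, not a conditional expectation onto ``spherical slices''. The crucial device you are missing is the \emph{restricted count} $\mathcal{N}_X^B$, in which each difference variable $x_j$ is weighted by $\mu_{B_2}*\mu_{B_2}$ for a suitably narrow Bohr set $B_2\prec B$. This restriction is what makes the main term tractable: since $f_1$ is approximately invariant under shifts in $B_1\supseteq B_2+B_2$, one can replace every $f_1(x_0+x_j)$ by $f_1(x_0)$, collapse the inner averages via \cref{mainAsymptotic}, and reduce to $\E_{x_0} f_1(x_0)^{k+2}\sigma(x_0)\geq(\E_{x_0}f_1\sigma)^{k+2}\geq\alpha^{k+2}+O(\epsilon)$ by H\"older and \eqref{fourconvolutions}. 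The same restricted count is what makes your ``Cauchy--Schwarz'' step for the $L^2$-small piece go through (the inner averages are then normalized to $1+O(\epsilon)$), and it also changes the treatment of the uniformity-small piece: one Fourier-expands the Bohr weights and applies \cref{gvn} mode by mode, picking up a factor of $\beta^{-k}$ which must be beaten by the choice of $\varphi$ in \cref{decomposition}. None of this mechanism appears in your outline, and without it the lower bound on the structured term is not established.
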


Here we write $c(\alpha,k)$ to stand in for some positive constant depending only on $\alpha$ and $k$, and $q \geq q(\alpha,k)$ to indicate $q$ is taken sufficiently large with respect to $\alpha$ and $k$.  We will use similar notation to indicate the dependency of constants $c, C > 0$ that may change between occurrences.  It is helpful to think of $\alpha$ and $k$ as fixed with $q$ allowed to tend toward infinity, and implicit constants in our big-O notation may depend on $k$.

Note that if $A$ would be a random subset of $\mathbf{F}_q^d$ of density $\alpha$ then it would contain $\alpha^{k+2}q^{(k + 1)d - k(k + 1)/2}$ isometric copies of a $(k + 2)$-point spherical configuration $X$ up to an error. This is because there are $k(k+1)/2$ quadratic relations, given by the length of the edges, between the points of the configuration $X$ and each vertex is contained in $A$ with probability $\alpha$. 

%We call a finite set $X\subseteq\mathbf{F}_q^d$ \emph{non-degenerate} if $V\cap V^\bot=\{0\}$ for $V=Span (X-X)$. As indicated above (see the appendix) a set $Y$ is isometric to $X$ if and only if $Y=z+U(X)$ for some vector $z$ and an orthogonal transformation $U$, to which we refer to as $Y$ being \emph{congruent} to $X$. Thus we have the following
%
%\begin{cor} Let $d,k \in \mathbf{N}$ with $d \geq 2k + 6$, $\alpha \in (0,1)$, and $q \geq q(\alpha,k)$.  If $A \subseteq \mathbf{F}_q^d$ with $|A| \geq \alpha q^d$, then $A$ contains at least $c(\alpha,k)q^{(k + 1)d - \binom{k + 1}{2}}$ congruent copies of every non-degenerate $k$-spherical configuration $X$ with $|X| = k + 2$.
%\end{cor}
%
%Let us remark that Graham's conjecture is expressed in this form, and also that two formulations coincide for generic spherical configurations (see the appendix).

It immediately follows from \cref{maintheorem} that for any fixed number of colors $r$, if we take $q$ sufficiently large with respect to $r$, then any $r$-coloring of $\mathbf{F}_q^{10}$ contains monochromatic, isometric copies of all 4-point spherical sets spanning 2 dimensions; this establishes a finite field version of Graham's conjecture that all cyclic quadrilaterals are Ramsey.  In fact, we prove a stronger statement relative to spheres; see \cite{graham83,matousek95} for some so-called ``sphere Ramsey'' results in the Euclidean setting.  

\begin{thm}\label{spheretheorem}
	Let $d,k \in \mathbf{N}$ with $d \geq 2k + 6$, $\alpha \in (0,1)$, and $q \geq q(\alpha,k)$.  If $\lambda \in \mathbf{F}_q^*$ and $A \subseteq S_\lambda$ with $|A| \geq \alpha q^{d - 1}$, then $A$ contains at least $c(\alpha,k)q^{(k + 1)d - (k+1)(k + 2)/2}$ isometric copies of every $(k + 2)$-point spherical configuration spanning $k$ dimensions.
\end{thm}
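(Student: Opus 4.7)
The plan is to mirror the proof of \cref{maintheorem} with all analysis carried out on $S_\lambda$ rather than on $\mathbf{F}_q^d$. A direct application of \cref{maintheorem} fails: viewed as a subset of $\mathbf{F}_q^d$, the set $A$ has density only $\alpha/q$, and the statistically correct count of isometric copies at this density is a factor of $q$ smaller than what is asserted here, so the spherical support of $A$ must play a structural role. A useful reformulation is that for $y, y' \in S_\lambda$ one has $|y - y'|^2 = 2\lambda - 2 y \cdot y'$, so counting isometric copies of a spherical $X$ inside $A$ is the same as counting $(k+2)$-tuples in $A^{k+2}$ with prescribed pairwise inner products, and every such tuple automatically lies on $S_\lambda$.

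I would then develop sphere-adapted analogs of the main ingredients: a uniformity norm for functions $f : S_\lambda \to \mathbf{C}$ measuring uniformity along spherical rectangles; a counting lemma (the analog of \cref{gvn}) controlling the count of isometric copies of $X$ by this norm, with main term of the expected size $q^{(k+1)d - (k+1)(k+2)/2}$; an inverse theorem (the analog of \cref{inverse}); and an arithmetic regularity lemma (the analog of \cref{decomposition}). The counting lemma should follow from repeated Cauchy--Schwarz in the variables $y_1, \dots, y_{k+2} \in S_\lambda$, using the spherical normalization at each step. Given these analogs, one decomposes $1_A = f_1 + f_2 + f_3$ on $S_\lambda$ with $f_1$ structured, $f_2$ small in $L^2(S_\lambda)$, and $f_3$ small in the sphere uniformity norm, and repeats the arguments of Sections \ref{5.1} and \ref{5.2}: the structured piece $f_1$ yields the main term of size $\alpha^{k+2} q^{(k+1)d-(k+1)(k+2)/2}$, and the other two pieces contribute negligibly thanks to the counting lemma.

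The main obstacle is the inverse theorem on $S_\lambda$, since one loses the additive-group structure of $\mathbf{F}_q^d$ that underlies the Fourier analysis in \cref{inverseSection}. A natural workaround is to transfer the problem back to $\mathbf{F}_q^d$ by convolving $f \cdot 1_{S_\lambda}$ against $1_{S_\lambda}$ (or a related kernel), use the Fourier decay of $\widehat{1_{S_\lambda}}$ to relate the sphere uniformity norm of $f$ to a standard uniformity norm of the convolution on $\mathbf{F}_q^d$, and then invoke the inverse theorem already established for \cref{maintheorem}. The hypothesis $d \geq 2k+6$ is precisely what allows these Fourier estimates to close, exactly as in the unconstrained setting.
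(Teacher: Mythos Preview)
Your plan is essentially the paper's own argument, but you have the logical flow inverted. In the paper, \cref{spheretheorem} is the primary technical result and \cref{maintheorem} is deduced from it by the averaging argument of Section~\ref{reductionSection}; there is no independent proof of \cref{maintheorem} to ``mirror''. More importantly, \cref{gvn}, \cref{inverse}, and \cref{decomposition} are \emph{already} the sphere-adapted statements you propose to develop: the counting operator $\mathcal{N}_X$ carries the weight $\mathcal{S}_X$ built from $\sigma = q\,1_{S_\lambda}$, the norm $\|f\|_{U^2_\perp(S)}$ is by definition $\|f\sigma\|_{U^2(\mathbf{F}_q^d)}$, and the decomposition is stated in terms of $\nu_B * (f\sigma)$ and $L^2(S)$. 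So no ``analogs'' need to be built; Sections~\ref{5.1}--\ref{5.2} show directly how \cref{gvn} and \cref{decomposition} combine to give \cref{spheretheorem}.

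Your proposed workaround for the inverse theorem---push $f$ to $\mathbf{F}_q^d$ by multiplying by $1_{S_\lambda}$ and use ambient Fourier analysis---is exactly what the paper does: the $U^2_\perp(S)$ norm \emph{is} the ambient $U^2$ norm of $f\sigma$, and \cref{inverse} bounds it by $\sup_\xi |\widehat{f\sigma}(\xi)|^{1/4}$ via Cauchy--Schwarz and the decay of $\widehat{\sigma}$. So the ``obstacle'' you flag is not an obstacle; the additive structure of $\mathbf{F}_q^d$ is used throughout, with the sphere entering only through the weight $\sigma$. In short: your outline is correct, and once you recognise that \cref{gvn}, \cref{inverse}, \cref{decomposition} are the sphere versions, the proof of \cref{spheretheorem} is precisely the argument in Sections~\ref{5.1}--\ref{5.2}.
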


A straightforward counting argument reveals that \cref{spheretheorem} quickly implies \cref{maintheorem}, this argument  is presented in \cref{reductionSection} below.

We remark here that the relationship between $d$ and $k$ in both theorems could be improved if one were only interested in ``high rank'' configurations; see the comments following \cref{mainAsymptotic}.  Our methods are further able to prove a version of \cref{spheretheorem} when $\lambda = 0$ provided $d > 2k + 6$, but we do not pursue this since it does not impact our proof of \cref{maintheorem}.

\section{Preliminaries}\label{preliminaries}

Here we record notation and ingredients that we will require for the proof of \cref{spheretheorem}.  Given any function $f : \mathbf{F}_q^d \rightarrow \mathbf{C}$ and $B \subseteq \mathbf{F}_q^d$, we write
\[
	\E_{x \in B} f(x) := \frac{1}{|B|} \sum_{x \in B} f(x)
\]
for the average of $f$ over $B$, and we will understand the average $\E_x f(x)$ is taken over $\mathbf{F}_q^d$.  We will condense multiple averages $\E_{y_1}\E_{y_2} \cdots \E_{y_k}$ as $\E_{y_1, \ldots, y_k}$, and to indicate linear independence we will use the notation
\[
	\E_{y_1, \ldots, y_k}^* := \frac{1}{q^{kd}} \sum_{\substack{y_1, \ldots, y_k \in \mathbf{F}_q^d \\ \text{linearly independent}}}
\]
Letting $\chi$ denote the canonical additive character of $\mathbf{F}_q$, we define the Fourier transform $\widehat{f} : \mathbf{F}_q^d \rightarrow \mathbf{C}$ by
\[
	\widehat{f}(\xi) := \E_x f(x)\chi(-\xi \cdot x)
\]
and we recall the Fourier inversion formula
$f(x) = \sum_{\xi \in \mathbf{F}_q^d} \widehat{f}(\xi)\chi(\xi \cdot x).
$
Given two functions $f,g : \mathbf{F}_q^d \rightarrow \mathbf{C}$, we recall Plancherel's identity
\[
	\E_x f(x)\overline{g(x)} = \sum_{\xi \in \mathbf{F}_q^d} \widehat{f}(\xi)\overline{\widehat{g}(\xi)}
\]
and, defining the convolution $f * g(x) = \E_y f(y)g(x - y)$, we also recall $\widehat{f * g}(\xi) = \widehat{f}(\xi)\widehat{g}(\xi)$.

We will write $\sigma_\lambda = q1_{S_\lambda}$ for a normalized indicator function of $S_\lambda$, where we have the asymptotic
\begin{equation}\label{sphere}
	\widehat{\sigma}_\lambda(\xi) = \begin{cases} 1 + O(q^{-1/2}) & \text{if } \xi = 0 \\ O(q^{-1/2}) & \text{otherwise} \end{cases}
\end{equation}
valid for $d \geq 2$ and $\lambda \in \mathbf{F}_q^*$; see, for instance, \cite[Lemma 2.2]{iosevich-rudnev} exploiting Weil's bounds on Kloosterman sums.  

To vectors $y_1, \ldots, y_{j - 1} \in \mathbf{F}_q^d$ and constants $c_1, \ldots, c_j \in \mathbf{F}_q$, we will associate the \emph{spherical measure}
\begin{equation}\label{jstar}
	\sigma_{y_1, \ldots, y_{j - 1}}^{c_1, \ldots, c_j}(y_j) = \begin{cases} q^j & \text{if } y_i \cdot y_j = c_i \text{ for all } 1 \leq i \leq j \\ 0 & \text{otherwise} \end{cases}.
\end{equation}
This is essentially an $L^1$-normalized indicator function for the intersection of the sphere $S_{c_j}$ with $j - 1$ hyperplanes, so one should expect Fourier decay in appropriate directions.  To import the corresponding Fourier asymptotics, we set
\[
	\delta_{y_1, \ldots, y_{j-1}}(\xi) := \begin{cases} 1 & \text{if } \xi \in \Span(y_1, \ldots, y_{j - 1}) \\ 0 & \text{otherwise,} \end{cases}
\]
and record the simplest case of \cite[Lemma 7]{ff-simplices}.

\begin{lem}\label{mainAsymptotic} Let $c_1, \ldots, c_j \in \mathbf{F}_q$ with $c_j \neq 0$ and $y_1, \ldots, y_{j - 1} \in \mathbf{F}_q^d$ linearly independent.  If $d \geq 2j$, then
\[
	|\widehat{\sigma}_{y_1, \ldots, y_{j - 1}}^{c_1, \ldots, c_j}(\xi)| = \delta_{y_1, \ldots, y_{j - 1}}(\xi) + O(q^{-1/2}).
\]
\end{lem}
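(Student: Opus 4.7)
The plan is to handle the hyperplane constraints by Fourier-expanding each in $\mathbf{F}_q$, reducing the calculation to a weighted sum of translates of $\widehat{\sigma}_{c_j}$, and then invoking Fourier decay of the sphere.

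Using $1_{s = 0} = q^{-1}\sum_{t \in \mathbf{F}_q} \chi(ts)$ once for each constraint $y_i \cdot y = c_i$ with $i < j$, I would first factor
\[
	\sigma_{y_1, \ldots, y_{j-1}}^{c_1, \ldots, c_j}(y) = \sigma_{c_j}(y) \sum_{t \in \mathbf{F}_q^{j-1}} \chi\bigl(\eta(t) \cdot y - a(t)\bigr),
\]
where $\eta(t) := \sum_{i < j} t_i y_i$ and $a(t) := \sum_{i < j} t_i c_i$, and then pass to the Fourier side to obtain
\[
	\widehat{\sigma}_{y_1, \ldots, y_{j-1}}^{c_1, \ldots, c_j}(\xi) = \sum_{t \in \mathbf{F}_q^{j-1}} \chi(-a(t))\,\widehat{\sigma}_{c_j}\bigl(\xi - \eta(t)\bigr).
\]
Since $y_1, \ldots, y_{j-1}$ are linearly independent, $t \mapsto \eta(t)$ is a bijection from $\mathbf{F}_q^{j-1}$ onto $V := \Span(y_1, \ldots, y_{j-1})$, so there is a unique $t_0$ with $\eta(t_0) = \xi$ when $\xi \in V$ and none otherwise. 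By the diagonal case of \eqref{sphere} this singleton term contributes $\chi(-a(t_0))(1 + O(q^{-1/2}))$, of modulus $\delta_{y_1, \ldots, y_{j-1}}(\xi) + O(q^{-1/2})$.

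The main obstacle is controlling the tail over the remaining at most $q^{j-1}$ values of $t$: the off-diagonal bound $O(q^{-1/2})$ recorded in \eqref{sphere} is far too crude to survive summation, since $q^{j-1} \cdot q^{-1/2}$ explodes as soon as $j \geq 2$. The resolution is to invoke the sharper power-saving decay $|\widehat{\sigma}_{c_j}(\zeta)| = O(q^{(1-d)/2})$ for $\zeta \neq 0$, which emerges from the explicit Gauss-sum evaluation
\[
	\widehat{\sigma}_{c_j}(\zeta) = 1_{\zeta = 0} + q^{-d/2}\sum_{t \neq 0}\varepsilon(t)^d\,\chi\bigl(-tc_j - |\zeta|^2/(4t)\bigr),
\]
obtained by writing $1_{|y|^2 = c_j}$ as a character sum over a dual variable and completing the square in $y$, where $\varepsilon(t)$ is the normalized quadratic Gauss sum of unit modulus. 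With this bound in hand, the tail of the previous display is at most $q^{j-1} \cdot O(q^{(1-d)/2}) = O(q^{(2j - d - 1)/2})$, which is $O(q^{-1/2})$ precisely when $d \geq 2j$, and the triangle inequality then yields the claimed asymptotic.
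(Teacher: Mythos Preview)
The paper does not actually prove this lemma; it merely records it as ``the simplest case of \cite[Lemma~7]{ff-simplices}.'' Your direct argument is correct and is essentially the standard computation behind that reference: Fourier-expand the $j-1$ affine constraints, reduce to a sum of $q^{j-1}$ translates of $\widehat{\sigma}_{c_j}$, isolate the diagonal term, and bound the tail using sharp sphere decay.

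One point deserves to be made explicit. You write that the bound $|\widehat{\sigma}_{c_j}(\zeta)| = O(q^{(1-d)/2})$ for $\zeta \neq 0$ ``emerges from'' the Gauss-sum formula you display, but the triangle inequality applied to that formula only gives $O(q^{1-d/2})$, which would force $d \geq 2j+1$ rather than $d \geq 2j$. To reach $O(q^{(1-d)/2})$ you must bound the inner sum $\sum_{t \neq 0} \varepsilon(t)^d \chi(-tc_j - |\zeta|^2/(4t))$ by $O(q^{1/2})$: for $d$ even this is a Kloosterman sum (or a Ramanujan sum when $|\zeta|^2 = 0$), and for $d$ odd a Sali\'e sum (or a quadratic Gauss sum when $|\zeta|^2 = 0$), and in each case the required square-root cancellation is Weil's bound. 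This is precisely what the paper invokes for \eqref{sphere} via \cite[Lemma~2.2]{iosevich-rudnev}, so you are not introducing anything foreign, but you should name the step rather than leave it implicit.
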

It is worth mentioning if one is willing to impose technical conditions on the defining vectors and scalars, then results within \cite{ff-simplices} include stronger asymptotics valid in the range $d > j$.  These would allow us to improve the required relationship of $d \geq 2k + 6$ in \cref{spheretheorem} for configurations avoiding self-orthogonal subspaces, but we opt instead for a uniform result valid for all $(k + 2)$-point spherical configurations spanning $k$ dimensions.  For clarity of presentation, we will often suppress the scalars $c_1, \ldots, c_j$ in the notation \eqref{jstar} since we will always restrict ourselves to $c_j \neq 0$ and \cref{mainAsymptotic} does not depend on the other scalars.

We will also require the notion of Bohr sets, which provide a substitute for fine subgroup structure.  We define the Bohr set of spectrum $\Gamma \subseteq \mathbf{F}_q^d$ and radius $\rho \in (0,1]$ by
\begin{equation}\label{bohrsetdefinition}
	B(\Gamma,\rho) := \{x \in \mathbf{F}_q^d : |\chi(\xi \cdot x) - 1| \leq \rho \text{ for all } \xi \in \Gamma\}.
\end{equation}
Setting $\beta = |B(\Gamma,\rho)|/q^d$, we record the standard bound
\begin{equation}\label{bohrsetsize}
	\beta \geq \Big(\frac{\rho}{2\pi}\Big)^{|\Gamma|}
\end{equation}
which can be found in \cite{taoVu}.  Setting $B = B(\Gamma,\rho)$, we will use the $L^1$ normalized indicator function $\mu_B = \beta^{-1} 1_B$ and its repeated convolution $\nu_B = \mu_B * \mu_B * \mu_B * \mu_B$.  The structure provided by repeated convolution will frequently be useful.  For example, provided $d \geq 2$, we can apply Fourier inversion, \eqref{sphere} and Plancherel to see
\begin{equation}\label{twoconvolutions}
		\mu_B * \mu_B * \sigma(x) = \sum_{\xi \in \mathbf{F}_q^d} \widehat{\mu}_B(\xi)^2 \widehat{\sigma}(\xi) \chi(\xi \cdot x) = 1 + O\Big(q^{-1/2} \sum_{\xi \in \mathbf{F}_q^d} |\widehat{\mu}_B(\xi)|^2\Big) = 1 + O(\beta^{-1}q^{-1/2}),
\end{equation}
from which it immediately follows that
\begin{equation}\label{fourconvolutions}
	\nu_B * \sigma(x) = 1 + O(\beta^{-1}q^{-1/2}).
\end{equation}
In other words, provided $q$ sufficiently large with respect to $\beta$, repeated convolutions of $\mu_B$ with $\sigma$ can be considered essentially constant.

We call the Bohr set $B(\Gamma,\rho)$ regular if for every $\epsilon > 0$, we have both
\begin{align*}
	|B(\Gamma,(1 + \epsilon)\rho)| &\leq (1 + 100\epsilon|\Gamma|) |B(\Gamma,\rho)|\\
	|B(\Gamma,(1 - \epsilon)\rho)| &\geq (1 - 100\epsilon|\Gamma|) |B(\Gamma,\rho)|;
\end{align*}
This definition, due to Bourgain \cite{bourgain99}, ensures that all Bohr sets are only a small dilation away from being regular (see \cite[Lemma 4.25]{taoVu}), and regular Bohr sets are essentially closed under addition by elements of their interior.   Given a regular Bohr set $B = B(\Gamma,\rho)$, we will write $B' \prec_\epsilon B$ in the case that $B' = B(\Gamma',\rho')$ is another regular Bohr set with $\Gamma' \supseteq \Gamma$ and $\rho' \leq \epsilon\rho / (200 \abs{\Gamma})$.  The following standard lemma provides the main consequence of regularity for our purposes; we include the proof for completeness.

\begin{lem}\label{regularity}
	Let $\epsilon \in (0,1)$, $B = B(\Gamma,\rho)$ regular with $B' \prec_\epsilon B$, and $f : \mathbf{F}_q^d \rightarrow \mathbf{C}$ with $|f| \leq 1$.  For any $y \in B'$,
	\[
		\abs{\E_{x \in B} f(x) - \E_{x \in B} f(x + y)} \leq \epsilon.
	\]
\end{lem}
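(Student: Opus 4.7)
The plan is to reduce the estimate to the symmetric-difference bound $|B \triangle (B + y)| \leq \epsilon|B|$ and then to obtain this bound by sandwiching $B + y$ between two nearby Bohr sets, at which point the regularity hypothesis does the work.

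First I would change variables in the translated average to rewrite
\[
\E_{x \in B} f(x) - \E_{x \in B} f(x + y) = \frac{1}{|B|}\sum_{z \in \F_q^d} f(z)\bigl(1_B(z) - 1_{B + y}(z)\bigr).
\]
Since $|f| \leq 1$, the left-hand side is bounded in absolute value by $|B \triangle (B + y)|/|B|$, reducing matters to the claim $|B \triangle (B + y)| \leq \epsilon|B|$.

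Next, set $\eta := \epsilon/(200|\Gamma|)$. For any $\xi \in \Gamma \subseteq \Gamma'$, the assumption $y \in B' = B(\Gamma',\rho')$ with $\rho' \leq \eta\rho$ gives $|\chi(\xi \cdot y) - 1| \leq \eta\rho$, and the identity $|\chi(-\xi \cdot y) - 1| = |\chi(\xi \cdot y) - 1|$ yields the same bound with $-y$ in place of $y$. Applying the elementary triangle inequality $|\chi(\xi \cdot (x \pm y)) - 1| \leq |\chi(\xi \cdot x) - 1| + \eta\rho$ for every $\xi \in \Gamma$ then produces the two-sided containment
\[
B(\Gamma,(1 - \eta)\rho) \subseteq B \cap (B + y) \subseteq B \cup (B + y) \subseteq B(\Gamma,(1 + \eta)\rho).
\]
Consequently $B \triangle (B + y)$ is contained in the Bohr annulus $B(\Gamma,(1+\eta)\rho) \setminus B(\Gamma,(1-\eta)\rho)$, whose cardinality is at most $200\eta|\Gamma| \cdot |B| = \epsilon|B|$ by the definition of regularity.

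There is no real obstacle here beyond tracking constants: the definitions $B' \prec_\epsilon B$ and \emph{regular Bohr set} have been tuned precisely so that this sandwich argument closes cleanly.
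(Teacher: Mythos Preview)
Your proof is correct and follows essentially the same approach as the paper: both reduce to bounding $|B \triangle (B+y)|/|B|$, sandwich $B \triangle (B+y)$ inside the Bohr annulus $B(\Gamma,(1+\eta)\rho) \setminus B(\Gamma,(1-\eta)\rho)$ (the paper writes this as $B(\Gamma,\rho+\rho') \setminus B(\Gamma,\rho-\rho')$, which is contained in yours since $\rho' \leq \eta\rho$), and invoke regularity to finish. Your write-up is in fact slightly more explicit than the paper's in verifying the character triangle inequality and the two containments, but the argument is the same.
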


\begin{proof}
	We have
	\[
		\abs{\E_{x \in B} f(x)  - \E_{x \in B} f(x + y)} = \frac{1}{|B|}\Big|\sum_{x \in \mathbf{F}_q^d} f(x)(1_B(x) - 1_B(x - y))\Big|
		\leq \frac{|B \triangle (y + B)|}{|B|}
	\]
	Since $B' \prec_\epsilon B$ and $y \in B'$ we have the relationship
	\[
		B \triangle (y + B) \subseteq B(\Gamma, \rho + \rho') \setminus B(\Gamma, \rho - \rho')
	\]
	and our claim follows from the bound $|B(\Gamma,\rho + \rho') \setminus B(\Gamma, \rho - \rho')| \leq \epsilon|B|$ resulting from regularity.
\end{proof}

\section{Reduction to Dense Spherical Sets}\label{reductionSection}

In this section we present the straightforward counting argument that estblishes  \cref{maintheorem} as a consequence of \cref{spheretheorem}.  The main observation is that the collection of spheres of a fixed radius $\lambda \in \mathbf{F}_q^*$ provides a uniform cover of $\mathbf{F}_q^d$.  Hence, any subset $A \subseteq \mathbf{F}_q^d$ with density $\alpha$ has density nearly $\alpha$ on a large number, in fact a positive proportion, of these spheres.  \cref{spheretheorem} then implies that within each of these spheres, $A$ contains many of the sought after configurations.  By counting the contribution of each of these spheres, it is easy to see that $A$ contains a positive proportion of the count of all such configurations within $\mathbf{F}_q^d$, as each fixed configuration is contained in approximately the same number of spheres.

We first record the characterization of spherical configurations that will be most useful as we proceed.  This follows in a straightforward way from \cref{appendixspherical} in the Appendix.

\begin{lem}\label{sphericalcondition} Let $X \subseteq \mathbf{F}_q^d$ be any $(k + 2)$-point spherical configuration spanning $k$ dimensions.  If any $(k + 1)$-point subset of $X$ that spans $k$ dimensions is contained in a sphere $S_\lambda(u)$, then $X \subseteq S_\lambda(u)$ as well.
\end{lem}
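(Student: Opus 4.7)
The plan is to argue directly from the definition of sphericality by comparing the sphere $S_\lambda(u)$ to an arbitrary sphere $S_{\lambda'}(u')$ containing all of $X$, and using the fact that the remaining point lies in the affine span of the large subset.

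Label the points of $X$ as $x_0,x_1,\ldots,x_{k+1}$ where $Y=\{x_1,\ldots,x_{k+1}\}$ is the $(k+1)$-point subset that spans $k$ dimensions and lies on $S_\lambda(u)$. By sphericality of $X$, fix $u'\in\mathbf{F}_q^d$ and $\lambda'\in\mathbf{F}_q$ with $X\subseteq S_{\lambda'}(u')$. Set $V=\Span(X-X)$, which is $k$-dimensional, and note that since $Y$ already spans $k$ dimensions we have $V=\Span(Y-Y)$.

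The first step is to show that $u-u'\in V^\perp$. For any $1\le i,j\le k+1$, expanding $|x_i-u|^2=|x_j-u|^2=\lambda$ gives
\[
(x_i+x_j)\cdot(x_i-x_j)-2(x_i-x_j)\cdot u=0,
\]
and the analogous identity holds with $u$ replaced by $u'$ and $\lambda$ replaced by $\lambda'$. Subtracting these yields $(x_i-x_j)\cdot(u-u')=0$ for all $i,j$, so $u-u'$ is orthogonal to every element of $V$.

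The second step is to deduce $x_0\in S_\lambda(u)$. Since $x_0-x_1\in X-X\subseteq V$, write $v=x_0-x_1\in V$. Bilinearity of the dot product gives
\[
|x_0-u|^2=|x_1-u|^2+2(x_1-u)\cdot v+|v|^2=\lambda+2(x_1-u)\cdot v+|v|^2,
\]
and similarly $|x_0-u'|^2=\lambda'+2(x_1-u')\cdot v+|v|^2$. Since $|x_0-u'|^2=\lambda'$, the last identity forces $2(x_1-u')\cdot v+|v|^2=0$. Using $(u-u')\cdot v=0$ from Step~1, we replace $u$ by $u'$ in the first expression and obtain $|x_0-u|^2=\lambda$, as required.

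There is no real obstacle: the argument is a short linear-algebra computation whose only delicate point is ensuring that sphericality is used to certify the existence of some sphere through $x_0\cup Y$, after which the centers of any two such spheres must differ by a vector in $V^\perp$, making the distance from $x_0$ to either center the same. Everything works over $\mathbf{F}_q$ because we only use bilinearity and symmetry of the form $x\cdot y$, and we never need to divide by the length of a self-orthogonal vector.
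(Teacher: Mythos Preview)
Your proof is correct. The paper takes a different route: it invokes the algebraic characterization of spherical sets in \cref{appendixspherical}, which says that whenever $\sum c_i=0$ and $\sum c_i x_i=0$ one also has $\sum c_i|x_i|^2=0$. From this one computes $\sum c_i|x_i-u|^2=0$ for any $u$, and since the $(k+1)$-point subset $Y$ is affinely independent, the unique (up to scaling) affine dependence among the $k+2$ points has nonzero coefficient $c_0$ on the remaining point $x_0$, forcing $|x_0-u|^2=\lambda$.

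Your argument is more geometric: you compare the given sphere $S_\lambda(u)$ to a sphere $S_{\lambda'}(u')$ through all of $X$ guaranteed by sphericality, observe that $u-u'\in V^\perp$, and conclude that the squared distance from $x_0$ to either center is the same. This avoids appealing to the separate characterization lemma and is entirely self-contained; the paper's route has the advantage of reusing a tool (\cref{appendixspherical}) that is needed elsewhere in the appendix anyway. Both are short linear-algebra computations and neither requires anything beyond bilinearity of the form.
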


Let us now see how \cref{spheretheorem} implies \cref{maintheorem}.

\begin{proof}[Proof of \cref{maintheorem}]
	Fixing $\lambda \in \mathbf{F}_q^*$, we first establish that for many centers $u \in \mathbf{F}_q^d$, $|A \cap S_\lambda(u)|$ is large.  For $d \geq 2$ and $x \in \mathbf{F}_q^d$, \eqref{sphere} implies
	$
		\E_u \sigma_\lambda(u - x) = 1 + O(q^{-1/2}),
	$	
	in which case we can ensure $q$ is sufficiently large for
	\begin{align*}
		(\alpha/2) q^{2d - 1} &\leq \sum_{u \in \mathbf{F}_q^d} |A \cap S_\lambda(u)|\\
		&= \sum_{\substack{u \in \mathbf{F}_q^d \\ |A \cap S_\lambda(u)| \geq (\alpha/4) q^{d - 1}}} |A \cap S_\lambda(u)| + \sum_{\substack{u \in \mathbf{F}_q^d \\ |A \cap S_\lambda(u)| < (\alpha/4) q^{d - 1}}} |A \cap S_\lambda(u)|\\
		&\leq |S_\lambda||\{u \in \mathbf{F}_q^d : |A \cap S_\lambda(u)| \geq (\alpha/4)q^{d - 1}\}| + (\alpha/4)q^{2d}.
	\end{align*}	
	Again using \eqref{sphere} and ensuring $q$ sufficiently large, we can conclude
	\[
		|\{u \in \mathbf{F}_q^d : |(A - u) \cap S_\lambda| \geq (\alpha/4)q^{d - 1}\}| \geq (\alpha/8)q^{d}.
	\]
	
	Fixing now some $(k + 2)$-point spherical configuration $X$ spanning $k$ dimensions, we see that for each of these good centers $u$, \cref{spheretheorem} guarantees that $A \cap S_\lambda(u)$ contains at least $c(\alpha,k)q^{(k + 1)d - \binom{k + 2}{2}}$ isometric copies of $X$.	We need to account for how many spheres we count each isometric copy within.  By translating $X$ if necessary, we can parametrize it as
	\[
		X = \{0, v_1, \ldots, v_k, a_1v_1 + \cdots + a_kv_k\}
	\]
	for linearly independent $v_1, \ldots, v_k \in \mathbf{F}_q^d$ and $a_1, \ldots, a_k \in \mathbf{F}_q$.  Then for a translation $x \in \mathbf{F}_q^d$ and linearly independent vectors $y_1, \ldots, y_k \in \mathbf{F}_q^d$ with $y_i \cdot y_j = v_i \cdot v_j$ for all $1 \leq i \leq j \leq k$, we wish to count how many spheres $S_\lambda(u)$ contain the configuration
	\[
	X' = \{x, x + y_1, \ldots, x + y_k, x + a_1y_1 + \cdots + a_ky_k\}.
	\]
	Since $y_1, \ldots, y_k$ are linearly independent, \cref{mainAsymptotic} applies to the spherical measure
	\[
		\sigma_{y_1, \ldots, y_k} (u) := \sigma_\lambda(u) \prod_{j = 1}^k \sigma_\lambda(y_j - u)
	\]
	so that, for $d \geq 2k + 2$,
	\[
		\E_{u \in \mathbf{F}_q^d} \sigma_{y_1, \ldots, y_k}(x - u) = 1 + O(q^{-1/2}).
	\]
	\cref{sphericalcondition} ensures that $\{x, x + y_1, \ldots, x + y_k\} \subseteq S_\lambda(u)$ implies $X' \subseteq S_\lambda(u)$ as well, so we have shown
	\[
		|\{u \in \mathbf{F}_q^d : X' \subseteq S_\lambda(u)\}| = (1 + O(q^{-1/2})) q^{d - k - 1}.
	\]
	Hence, within each of the $(\alpha/8)q^d$ good spheres $S_\lambda(u)$, $A \cap S_\lambda(u)$ contains $c(\alpha,k)q^{(k + 1)d - \binom{k + 2}{2}}$ isometric copies of $X$, and each of these copies is contained in roughly $q^{d - k - 1}$ spheres.  In total this yields that $A$ contains $c(\alpha,k)q^{(k + 1)d - \binom{k + 1}{2}}$ isometric copies of $X$ as claimed.
\end{proof}

\section{Proof of \cref{spheretheorem}}\label{mainargument}

In this section we reduce the task of proving \cref{spheretheorem} to that of establishing Propositions \ref{gvn} and \ref{decomposition} below.

\subsection{Counting Configurations, a Uniformity Norm, and two key Propositions}\label{TwoProps}

For the remainder, we fix $\lambda \in \mathbf{F}_q^*$ and aim to establish \cref{spheretheorem}; there is no harm in assuming $\lambda = 1$.  
For brevity, we set $S = S_\lambda$  and $\sigma = \sigma_\lambda$.  We will be considering $(k + 2)$-point spherical configurations spanning $k$ dimensions typically parameterized as
\[
	X := \{0, v_1, \ldots, v_k, a_1v_1 + \cdots + a_kv_k\} \subseteq \mathbf{F}_q^d,
\]
with $v_1, \ldots, v_k \in \mathbf{F}_q^d$ linearly independent and coefficients $a_1, \ldots, a_k \in \mathbf{F}_q$.  We will consider other sets of $k + 2$ points of the form
\[
	X' = \{x_0, x_0 + x_1, \ldots, x_0 + x_k, x_0 + a_1x_1 + \cdots + a_kx_k\} \subseteq \mathbf{F}_q^d
\]
and check whether these are indeed isometric copies of $X$ contained within $S$ by checking whether $x_1, \ldots, x_k$ are linearly independent and further satisfy the conditions
\begin{equation*}
|x_0|^2 = |x_0 + x_1|^2 = \cdots = |x_0 + x_k|^2 = \lambda \text{ and }
x_i \cdot x_j = v_i \cdot v_j \text{ for each }1 \leq i \leq j \leq k;
\end{equation*}
when all of this is true, we write $X' \simeq X$.  We remark that this notation only explicitly insists that $k + 1$ points of $X'$ lie on $S$.  However, since we are working with the same coefficients $a_1, \ldots, a_k$, \cref{appendixspherical} ensures $X'$ is spherical and \cref{sphericalcondition} ensures $X' \subset S$ as well.  That is, if $X' \simeq X$, then it is also the case that $X' \subseteq S$.  

To count copies of $X$ parameterized as $X'$ above, we define the weight
\[
	\mathcal{S}_X(x_0, \ldots, x_k) := \begin{cases} q^{(k+1)(k + 2)/2} & \text{if } X' \simeq X \\ 0 & \text{otherwise} \end{cases}
\]
and a normalized counting operator on functions $f_0, f_1, \ldots, f_{k + 1} : \mathbf{F}_q^d \rightarrow \mathbf{C}$ by
\begin{equation}\label{thecount}
	\mathcal{N}_X(f_0, \ldots, f_{k + 1}) := \E_{x_0} f_0(x_0)\E_{x_1, \ldots, x_k}^* \Big(\prod_{j = 1}^{k}f_j(x_0 + x_j)\Big)f_{k + 1}(x_0 + a_1x_1 + \cdots + a_kx_k)\mathcal{S}_X(x_0, \ldots, x_k).
\end{equation}

Note that as long as we restrict our attention to linearly independent $x_1, \ldots, x_k$, we can write out
\[
	\mathcal{S}_X(x_0, \ldots, x_k) = \sigma(x_0)\prod_{j = 1}^{k} \sigma_{x_0, \ldots, x_{j - 1}}(x_j)
\]
for appropriate spherical measures with implicit scalars determined by our sphere $S$ and the dot products between the defining vectors $v_1, \ldots, v_k$ of $X$.  Moreover, the contribution of linearly dependent $x_1, \ldots, x_k$ is negligible, in the sense that we trivially have
\[
	\frac{1}{q^d} \sum_{x_j \in \Span(x_1, \ldots, x_{j - 1})} \sigma_{x_1, \ldots, x_{j - 1}}(x_j) \leq q^{-1} \]
whenever $d \geq 2j$.  Together with \cref{mainAsymptotic}, this allows us to freely add in linearly dependent $x_1, \ldots, x_k$ to our count \eqref{thecount} at the cost of an acceptable $O(q^{-1})$ error, provided each $f_j$ is bounded as will typically be the case.

% TODO: Need better notation than \delta_{h \cdot h'} here...what did I use in thesis?

The starting point for our argument is to show that the counting operator \eqref{thecount} is controlled by what we call the $U^2_\perp(S)$ norm, defined for $f : \mathbf{F}_q^d \rightarrow \mathbf{C}$ by
\[
	\norm{f}_{U^2_\perp(S)} := \Big(\E_{x,h,h'} f\sigma(x)\overline{f}\sigma(x + h)\overline{f}\sigma(x + h')f\sigma(x + h + h')\Big)^{1/4},
\]
which is the usual Gowers $U^2(\mathbf{F}_q^d)$ norm of the function $f\sigma$.  While the $U^2(\mathbf{F}_q^d)$ norm averages over combinatorial rectangles, the $U^2_\perp(S)$ norm averages instead over geometric rectangles contained within our sphere $S$.  Note that if
\[
	|x|^2 = |x + h|^2 = |x + h'|^2 = |x + h + h'|^2 = \lambda,
\]
then it is also the case that $h \cdot h' = 0$.  

We will show that the operator \eqref{thecount} is controlled by the $U^2_\perp(S)$ norm in the following sense.

\begin{prop}\label{gvn}
	Let $f_0, \ldots, f_{k + 1} : \mathbf{F}_q^d \rightarrow \mathbf{C}$ with $|f_j| \leq 1$.  If $d \geq 2k + 6$, then
	\[
		|\mathcal{N}_X(f_0, \ldots, f_{k + 1})| \leq \min_{0 \leq j \leq k + 1} \norm{f_j}_{U^2_\perp(S)} + O(q^{-1/8})
	\]
\end{prop}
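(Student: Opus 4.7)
The proof is a generalized-von-Neumann-style bound, using the Fourier asymptotics for the spherical measures from \cref{mainAsymptotic} together with iterated Cauchy--Schwarz.  Although the roles played by the $k+2$ functions are not fully symmetric (the vertex $x_0$ is distinguished from the ``leg'' vertices $x_0+x_j$, as is the ``dependent'' vertex $x_0 + a_1x_1 + \cdots + a_kx_k$), the argument proceeds by the same general scheme for each $j$, so I will sketch the bound $|\mathcal{N}_X| \leq \norm{f_{k+1}}_{U^2_\perp(S)} + O(q^{-1/8})$; the other values of $j$ are handled by analogous Cauchy--Schwarz arrangements.

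First I would clean up the counting operator.  The observation following the definition of $\mathcal{N}_X$ allows us to replace $\E^*_{x_1,\ldots,x_k}$ by the unrestricted average $\E_{x_1,\ldots,x_k}$ at a cost of $O(q^{-1})$, since $d \geq 2k+6$ easily accommodates the bound $d \geq 2k$ required there.  Next, for each $j = 1, \ldots, k$, I would Fourier-expand $\sigma_{x_0,\ldots,x_{j-1}}(x_j) = \sum_{\xi_j} \widehat{\sigma}_{x_0,\ldots,x_{j-1}}(\xi_j)\chi(\xi_j \cdot x_j)$ and split, via \cref{mainAsymptotic}, into a main contribution with Fourier support on $\Span(x_0,\ldots,x_{j-1})$ and an error of size $O(q^{-1/2})$.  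Keeping only the main terms reduces $\mathcal{N}_X$ to a weighted average over tuples $(x_0, \ldots, x_k)$ whose corresponding configuration of $k+1$ vertices lies on $S$ with prescribed pairwise dot products.

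Next I would apply a sequence of Cauchy--Schwarz inequalities to extract the $U^2_\perp(S)$ structure.  The strategy is to iteratively double one variable at a time, using $|f_j|\leq 1$ to discard the factors that are not being targeted, while carefully preserving the sphere constraints on the doubled variables.  After an appropriate sequence of such steps (the precise count depending on the configuration, but bounded by a function of $k$), what remains is an average of four copies of $f_{k+1}\sigma$ evaluated at the vertices $x, x+h, x+h', x+h+h'$ of a rectangle whose four corners all lie on $S_\lambda$.  The crucial geometric point is that once these four corners lie on the sphere $S_\lambda$ with $\lambda\neq 0$, the identity $|x|^2 = |x+h|^2 = |x+h'|^2 = |x+h+h'|^2 = \lambda$ forces $h \cdot h' = 0$; this matches exactly the definition of the $U^2_\perp(S)$ norm, and taking the appropriate reciprocal power yields the claimed bound.

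The main obstacle is choreographing the Cauchy--Schwarz applications so that the \emph{geometric} $U^2_\perp(S)$ norm---rather than the combinatorial Gowers $U^2$ on $\mathbf{F}_q^d$---emerges at the end: the spherical constraints must survive each doubling so that the orthogonality $h \cdot h' = 0$ is forced by the sphere itself rather than inserted by hand.  The secondary obstacle is the quantitative propagation of the $O(q^{-1/2})$ Fourier errors from \cref{mainAsymptotic}: each Cauchy--Schwarz essentially square-roots the accumulated error, so after a bounded number of applications the error degrades to roughly $(q^{-1/2})^{1/4} = q^{-1/8}$, matching the conclusion.  The hypothesis $d \geq 2k + 6$ ensures that \cref{mainAsymptotic} applies with uniform $O(q^{-1/2})$ error across all the spherical measures that arise during the iteration.
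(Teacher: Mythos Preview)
Your outline misses the key mechanism and is internally inconsistent about the number of Cauchy--Schwarz steps.  The paper uses exactly \emph{two} applications of Cauchy--Schwarz, not a $k$-dependent number; this is forced both by the error bookkeeping (two applications turn the $O(q^{-1/2})$ from \cref{mainAsymptotic} into $O(q^{-1/8})$ after taking a fourth root) and by the fact that further doubling would produce a higher-order Gowers-type norm rather than $U^2_\perp(S)$.  Your statement that the number of steps is ``bounded by a function of $k$'' cannot be reconciled with the claimed $q^{-1/8}$.

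The substantive gap is what happens between the two Cauchy--Schwarz steps.  The paper targets $f_k$ (any index is reachable by relabelling vertices).  After the first Cauchy--Schwarz in $x_k$ one is left with $\Delta_h f_k(x_k)\,\Delta_h f_{k+1}(x_{k+1})$ inside the inner average, and since $x_{k+1}=\sum_{j=0}^k a_j x_j$ still depends on $x_k$, you \emph{cannot} discard $f_{k+1}$ by $|f_{k+1}|\le 1$.  The decisive observation is that a $(k+2)$-point spherical configuration spanning $k$ dimensions must have at least two nonzero coefficients $a_j$; choosing some $j<k$ with $a_j\ne 0$ and reindexing the outer average in $x_j$ replaces $x_{k+1}$ by a combination $x_{k+1}'=\sum_{j<k} a_j' x_j$ independent of $x_k$.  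Only then can $\Delta_h f_{k+1}(x_{k+1}')$ be pulled outside and removed by the triangle inequality, after which the second Cauchy--Schwarz in $x_k$ yields $\Delta_{h'}\Delta_h f_k\sigma(x_k)$.  The remaining spherical measures are reorganised so that $x_0,\dots,x_{k-1}$ are governed by measures of the form $\sigma_{x_k,h,h',x_0,\dots,x_{j-1}}$, and \cref{mainAsymptotic} (requiring $d\ge 2(k+2)+2=2k+6$ because of the three extra defining vectors $x_k,h,h'$) evaluates that outer average to $1+O(q^{-1/2})$.  Your proposed Fourier expansion of each $\sigma_{x_0,\dots,x_{j-1}}$ at the outset is not how \cref{mainAsymptotic} enters: it is applied only once, at the end, to collapse the residual average.
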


Results of this type are often called  \emph{generalized von-Neumann inequalities} in the arithmetic combinatorics literature. The proof of \cref{gvn} is presented in 
 \cref{gvnproofSection} below.  
 
 To see the utility of such a result, consider a set $A \subseteq S$ with $|A| \geq \alpha q^{d - 1}$ and $f_A = 1_A - \alpha$.  If $A$ is sufficiently uniform, in the sense that $\norm{f_A}_{U^2_\perp(S)}$ is sufficiently small with respect to $\alpha$ and $k$, then the decomposition $1_A = \alpha + f_A$ along with \cref{gvn} provides
\[
	\mathcal{N}_X(1_A, \ldots, 1_A) \gtrsim \alpha^{k + 2}
\]
provided $q$ is taken sufficiently large with respect to $\alpha$ and $k$.  Of course, not all sets must be uniform in this sense, and we will require a more sophisticated decomposition.  Defining the $L^2(S)$ norm by
\[
	\norm{f}_{L^2(S)} := \Big(\E_x |f(x)|^2\sigma(x)\Big)^{1/2},
\]
we will use the following decomposition.

\begin{prop}\label{decomposition} Let $\eta \in (0,1)$, $f : \mathbf{F}_q^d \rightarrow [-1,1]$ and $\varphi : (0,1]^2 \rightarrow (0,1)$ increasing in both coordinates.  If $d \geq 2$ and $q \geq q(\eta,f)$, then there exists $B = B(\Gamma,\rho)$ regular with $|\Gamma| \leq C(\eta,f)$ and $\rho \geq c(\eta,f)$, and there exist functions $f_1, f_2, f_3 : \mathbf{F}_q^d \rightarrow [-2,2]$ with
\begin{align*}
	f &= f_1 + f_2 + f_3\\
	f_1 &= \nu_B * (f\sigma)\\
	\norm{f_2}_{U^2_\perp(S)} &\leq \varphi(|\Gamma|^{-1},\rho)\\
	\norm{f_3}_{L^2(S)} &\leq \eta.
\end{align*}
\end{prop}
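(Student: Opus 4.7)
The plan is to prove \cref{decomposition} by an iterative \emph{energy increment} argument along regular Bohr sets, invoking the inverse theorem (\cref{inverse}) at each stage that does not yet yield the desired decomposition. We initialize with a trivial regular Bohr set $B_0 = B(\{0\}, 1/2)$, and at stage $n$ with $B_n = B(\Gamma_n, \rho_n)$ set $f_1^{(n)} := \nu_{B_n}*(f\sigma)$. The estimate \eqref{fourconvolutions} applied to $|f\sigma| \leq \sigma$ gives $|f_1^{(n)}| \leq \nu_{B_n}*\sigma = 1 + O(\beta_n^{-1}q^{-1/2}) \leq 2$ provided $q$ is sufficiently large compared to $|\Gamma_n|$ and $\rho_n^{-1}$, so the codomain constraint $f_j : \mathbf{F}_q^d \to [-2,2]$ will be automatic as long as the final Bohr parameters remain bounded.

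At stage $n$, we test whether the residual $r_n := f - f_1^{(n)}$ admits a decomposition $r_n = f_2 + f_3$ with $\|f_2\|_{U^2_\perp(S)} \leq \varphi(|\Gamma_n|^{-1}, \rho_n)$ and $\|f_3\|_{L^2(S)} \leq \eta$; if so, we terminate. Otherwise, a Hahn--Banach separation argument applied to $r_n$ and the convex set of admissible $f_2 + f_3$ yields a witnessing test function $h$ whose $(U^2_\perp(S))^\ast$-norm is bounded and whose $L^2(S)$-norm is at most $\eta^{-1}$, yet $|\langle r_n, h\sigma\rangle|$ exceeds $\varphi(|\Gamma_n|^{-1}, \rho_n)$. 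Applying the inverse theorem \cref{inverse} to $h$ extracts a frequency $\xi_n$ satisfying $|\widehat{h\sigma}(\xi_n)| \geq c(\eta, \varphi(|\Gamma_n|^{-1}, \rho_n))$; after bounding the contribution $|\widehat{f_1^{(n)}\sigma}(\xi_n)|$, this yields a lower bound on $|\widehat{f\sigma}(\xi_n)|$ of the same order.

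We refine by setting $\Gamma_{n+1} := \Gamma_n \cup \{\xi_n\}$ and choosing $\rho_{n+1} \leq \rho_n$ small enough that $B_{n+1}$ is regular and $B_{n+1} \prec_{\epsilon_n} B_n$ for a suitable $\epsilon_n$; regularity is secured by the dilation trick of \cite[Lemma 4.25]{taoVu}. The energy we track is
\[
\mathcal{E}_n := \|f_1^{(n)}\|_2^2 = \sum_\xi |\widehat{\mu_{B_n}}(\xi)|^8 |\widehat{f\sigma}(\xi)|^2 \leq \|f_1^{(n)}\|_\infty^2 \leq 4.
\]
Since $\xi_n \in \Gamma_{n+1}$, definition \eqref{bohrsetdefinition} gives $|\widehat{\mu_{B_{n+1}}}(\xi_n)| \geq 1 - O(\rho_{n+1})$, so combined with the lower bound on $|\widehat{f\sigma}(\xi_n)|$ we obtain an energy increment $\mathcal{E}_{n+1} - \mathcal{E}_n \geq c(\eta, \varphi(|\Gamma_n|^{-1}, \rho_n)) > 0$. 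The uniform boundedness of $\mathcal{E}_n$ then forces termination after $N = N(\eta, f)$ steps, producing a regular Bohr set with $|\Gamma| \leq C(\eta, f)$ and $\rho \geq c(\eta, f)$ as required.

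The main obstacle is the Hahn--Banach step, which must be set up so that the extracted witness $h$ has the properties needed to feed into \cref{inverse}: because the admissible set is the Minkowski sum of a $U^2_\perp(S)$-ball and an $L^2(S)$-ball, $h$ must simultaneously have controlled $(U^2_\perp(S))^\ast$-mass (so that the inverse theorem produces a meaningful correlation) and bounded $L^2(S)$-norm. A secondary bookkeeping challenge arises from the fact that $\varphi(|\Gamma_n|^{-1}, \rho_n)$ shrinks along the iteration as $|\Gamma_n|$ grows and $\rho_n$ decreases, so one must verify that the per-step energy gain remains large enough, in terms of $\eta$ alone, to force termination in a number of steps independent of $q$.
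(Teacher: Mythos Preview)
Your proposal has two genuine gaps.

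\textbf{The Hahn--Banach step does not feed into the inverse theorem.} The separating functional $h$ you extract has bounded \emph{dual} norm $\|h\|_{(U^2_\perp(S))^*}$ and bounded $\|h\|_{L^2(S)}$; nothing forces $\|h\|_{U^2_\perp(S)}$ to be large, and the inverse theorem (\cref{inverse}) needs precisely the latter hypothesis (together with $|h|\le 1$, which you also do not have). Even granting a frequency $\xi_n$ with $|\widehat{h\sigma}(\xi_n)|$ large, this tells you nothing about $|\widehat{r_n\sigma}(\xi_n)|$ or $|\widehat{f\sigma}(\xi_n)|$: the large Fourier coefficient lives on the wrong function. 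The paper bypasses this entirely by applying \cref{inverse} \emph{directly} to $f - \nu_B*(f\sigma)$ whenever its $U^2_\perp(S)$ norm exceeds the current threshold, obtaining a frequency $\gamma$ with $|\widehat{f\sigma}(\gamma)|\,|1-\widehat{\mu}_B(\gamma)^4|\ge \eta^4/2$ and hence a concrete energy increment of $c\eta^8$ upon adding $\gamma$ to the spectrum.

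\textbf{The single-loop termination argument cannot close.} You correctly flag that the per-step gain is only $c\bigl(\eta,\varphi(|\Gamma_n|^{-1},\rho_n)\bigr)$, but then assert it can be bounded below ``in terms of $\eta$ alone''. It cannot: $\varphi$ is an \emph{arbitrary} increasing function, and $|\Gamma_n|^{-1},\rho_n$ shrink along the iteration, so the gain may decay faster than any prescribed rate. The paper resolves this with a \emph{two-level} iteration. An inner loop (\cref{kvn}) fixes a single threshold $\eta_j$ and iterates the energy increment, each step gaining $c\eta_j^8$, until $\|f-\nu_{B'}*(f\sigma)\|_{U^2_\perp(S)}\le \eta_j$; this terminates in $O(\eta_j^{-8})$ steps. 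An outer loop runs $j=1,2,\dots$ with $\eta_j=\varphi(|\Gamma_{j-1}|^{-1},\rho_{j-1})$, producing a nested sequence $B_0\succ B_1\succ\cdots$, and uses pigeonhole on the bounded, increasing sequence $\|\nu_{B_j}*(f\sigma)\|_{L^2(S)}^2$ to locate some $j\le O(\eta^{-2})$ with $\|\nu_{B_{j+1}}*(f\sigma)-\nu_{B_j}*(f\sigma)\|_{L^2(S)}\le\eta$. One then takes $f_1=\nu_{B_j}*(f\sigma)$, $f_2=f-\nu_{B_{j+1}}*(f\sigma)$, $f_3=\nu_{B_{j+1}}*(f\sigma)-\nu_{B_j}*(f\sigma)$. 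This decoupling of the $U^2_\perp(S)$ threshold (which is allowed to shrink with $\varphi$) from the $L^2(S)$ threshold (fixed at $\eta$) is exactly what your single loop is missing.
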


Results of this type are often called  \emph{arithmetic regularity lemmas} in the arithmetic combinatorics literature. 
In light of \eqref{fourconvolutions}, it should at least be clear that ensuring $f_1$ is bounded amounts to ensuring $q$ is sufficiently large with respect to the other parameters. 
The proof of \cref{decomposition} is presented in 
 \cref{last} below.
 
 \smallskip
 
   We close this section by demonstrating, as promised, how Propositions \ref{gvn} and \ref{decomposition} can be applied to give a proof of \cref{spheretheorem}.  We initially specialize,  in \cref{5.1} below, to the case of spherical quadrilaterals, that is when $k=2$. The proof of the general case follows along similar lines and is presented in \cref{5.2}.
   
\subsection{Proof that Propositions \ref{gvn} and \ref{decomposition} imply the $k=2$ case of \cref{spheretheorem}}\label{5.1}

It clearly suffices to establish the following
\begin{thm}
	Let $\alpha \in (0,1)$ and $A \subseteq S$ with $|A| \geq \alpha q^{d - 1}$.  If $d \geq 10$, $q \geq q(\alpha)$, and $X \subseteq \mathbf{F}_q^d$ is a spherical 4-point configuration spanning 2 dimensions, then
	\[
		\mathcal{N}_X(1_A, 1_A, 1_A, 1_A) \geq c(\alpha).
	\]
	In other words, $A$ contains $c(\alpha) q^{3d - 6}$ isometric copies of $X$.
\end{thm}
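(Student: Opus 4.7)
The plan is to apply \cref{decomposition} to $1_A$, whose $\sigma$-weighted mean $\E_x 1_A(x)\sigma(x) = |A|/q^{d-1}$ is at least $\alpha$, and then to expand $\mathcal{N}_X(1_A, 1_A, 1_A, 1_A)$ multilinearly into $3^4 = 81$ subcounts. With parameters $\eta > 0$ and $\varphi\colon (0,1]^2 \to (0,1)$ to be chosen, \cref{decomposition} provides a regular Bohr set $B = B(\Gamma,\rho)$ with $|\Gamma| \leq C(\eta)$ and $\rho \geq c(\eta)$, together with the decomposition $1_A = f_1 + f_2 + f_3$ where $f_1 = \nu_B * (1_A \sigma) \geq 0$, $\norm{f_2}_{U^2_\perp(S)} \leq \varphi(|\Gamma|^{-1}, \rho)$, and $\norm{f_3}_{L^2(S)} \leq \eta$. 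The aim is to lower-bound the main term $\mathcal{N}_X(f_1, f_1, f_1, f_1)$ by some $c(\alpha) > 0$, while controlling the remaining $80$ error terms by at most $c(\alpha)/2$ in total.

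For the error terms I would split into two cases. Each of the $3^4 - 2^4 = 65$ terms containing at least one $f_2$-slot is handled by \cref{gvn}: rescaling by $2$ so that each $g_j/2 \in [-1,1]$, the proposition yields the bound $O(\norm{f_2}_{U^2_\perp(S)}) + O(q^{-1/8}) \leq O(\varphi(|\Gamma|^{-1},\rho)) + O(q^{-1/8})$ per term. Each of the $15$ remaining terms contains some $f_3$ but no $f_2$; for these, I would apply Cauchy--Schwarz in the slot where $f_3$ appears (after a change of variable $y = x_0 + x_j$ if that slot is not $x_0$) to isolate $\norm{f_3}_{L^2(S)} \leq \eta$, with the remaining factor controlled by $O(1)$ using $|f_1| \leq 2$ and the uniform $L^1$-bounds on the spherical measures composing $\mathcal{S}_X$ from \cref{mainAsymptotic}. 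Choosing $\eta$ small in terms of $\alpha$, then $\varphi$ small in terms of $\alpha$ and the complexity bounds $C(\eta), c(\eta)$, then $q$ large enough to absorb all $O(q^{-1/8})$ errors, would make the total error at most $c(\alpha)/2$.

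The main obstacle, and the step I would address first, is the lower bound $\mathcal{N}_X(f_1, f_1, f_1, f_1) \geq c(\alpha)$. Although $f_1 \geq 0$ has $\sigma$-weighted mean at least $\alpha/2$ by \eqref{fourconvolutions} (since $\E f_1 \sigma = \E(1_A\sigma)(\nu_B * \sigma) = \alpha + O(\beta^{-1}q^{-1/2})$ where $\beta = |B|/q^d \geq (\rho/2\pi)^{|\Gamma|}$), it need not be approximately constant on $S$; thus no pointwise substitution $f_1 \leadsto \alpha$ works, and $\mathcal{N}_X$ does not admit a direct multilinear Cauchy--Schwarz lower bound, as $\mathcal{S}_X$ is not manifestly positive definite. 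The approach I would employ is Fourier analytic: expanding each slot of $\mathcal{N}_X$ via Fourier inversion produces a multilinear sum over frequencies $(\xi_0, \ldots, \xi_3)$ weighted by $\prod_i \widehat{f_1}(\xi_i) = \prod_i \widehat{\nu_B}(\xi_i)\widehat{1_A\sigma}(\xi_i)$ together with Fourier coefficients of the spherical measures comprising $\mathcal{S}_X$. The all-zero frequency contribution gives the principal term $(\E f_1\sigma)^4(1+o(1)) \geq \alpha^4/32$, while the off-diagonal terms are the technical core of the argument: they must be controlled using both the $O(q^{-1/2})$ Fourier decay from \cref{mainAsymptotic} and \eqref{sphere} and the Fourier localization of $\widehat{\nu_B}$ on the bounded-complexity spectrum $\Gamma$, to yield a negligible error once $q$ is large in terms of $\eta$.
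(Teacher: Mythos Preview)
Your handling of the error terms (\cref{gvn} for the $f_2$-slots and a Cauchy--Schwarz / reparametrization argument for the $f_3$-slots) is essentially the paper's approach and is fine. The genuine gap is in your treatment of the main term $\mathcal{N}_X(f_1,f_1,f_1,f_1)$.

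Your plan hinges on ``the Fourier localization of $\widehat{\nu_B}$ on the bounded-complexity spectrum $\Gamma$'', but this is false: a Bohr set is not a subgroup, and $\widehat{\mu_B}$ (hence $\widehat{\nu_B}$) can be large at \emph{many} frequencies outside $\Gamma$. The definition of $B(\Gamma,\rho)$ forces $\widehat{\mu_B}(\xi)\approx 1$ for $\xi\in\Gamma$, but gives no decay whatsoever for $\xi\notin\Gamma$; think of a Bohr set in $\Z/p\Z$, which is an interval and whose Fourier transform decays only like $1/|\xi|$. Consequently there is no way to isolate the zero-frequency contribution in a Fourier expansion of $\mathcal{N}_X(f_1,f_1,f_1,f_1)$ and bound the off-diagonal terms using only $|\Gamma|$, $\rho$, and the $O(q^{-1/2})$ decay of the spherical measures.

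The paper supplies the missing idea: rather than working with $\mathcal{N}_X$, it introduces a \emph{restricted} count $\mathcal{N}_X^B$ in which the free variables $y,z$ are weighted by $\mu_{B_2}*\mu_{B_2}$ for a much smaller regular Bohr set $B_2\prec_\epsilon B$ (chosen so that $y,z,ay+bz\in B_1\prec_\epsilon B$). One has the trivial comparison $\mathcal{N}_X(1_A,\ldots,1_A)\geq \beta^2\,\mathcal{N}_X^B(1_A,\ldots,1_A)$ with $\beta=|B_2|/q^d\geq c(\alpha)$, so it suffices to lower-bound $\mathcal{N}_X^B$. Now the point of $f_1=\nu_B*(1_A\sigma)$ is not Fourier localization but \emph{approximate translation invariance}: by \cref{regularity}, $f_1(x+t)=f_1(x)+O(\epsilon)$ whenever $t\in B_1$. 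With $y,z,ay+bz$ forced into $B_1$, the main term collapses to $\E_x f_1(x)^4\sigma(x)+O(\epsilon)$, and H\"older plus \eqref{fourconvolutions} gives $\geq \alpha^4+O(\epsilon)$. The $f_2$- and $f_3$-cases are then handled on $\mathcal{N}_X^B$; for the $f_2$-case the paper Fourier-expands only the benign weights $\mu_{B_2}*\mu_{B_2}$ (not $f_1$) to reduce back to unrestricted counts $\mathcal{N}_X$ with character twists, where \cref{gvn} applies. This Bohr-restriction device is the step you are missing.
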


\begin{proof}
To set up, let $\epsilon > 0$ be a parameter to be determined only in terms of $\alpha$ and let $\varphi : (0,1]^2 \rightarrow (0,1)$ be a function increasing in both coordinates to be determined only in terms of $\epsilon$.  We apply the decomposition theorem to obtain a regular Bohr set $B = B(\Gamma,\rho)$ with $\abs{\Gamma} \leq C(\epsilon,\varphi)$ and $\rho \geq c(\epsilon,\varphi)$ and functions $f_1, f_2, f_3 : \mathbf{F}_q^d \rightarrow [-2,2]$ such that
\begin{align*}
	1_A &= f_1 + f_2 + f_3\\
	f_1 &= \nu_B * (1_A\sigma)\\
	\norm{f_2}_{U^2_\perp(S)} &\leq \varphi(|\Gamma|^{-1},\rho)\\
	\norm{f_3}_{L^2(S)} &\leq \epsilon.
\end{align*}
We take $\rho' \in [\epsilon \rho / (400|\Gamma|), \epsilon \rho / (200 |\Gamma|)]$ so that $B_1 := B(\Gamma,\rho') \prec_\epsilon B$.  Throughout the argument we will continue to take $q(\alpha)$ large enough so that, with parameters other than $q$ fixed, we can absorb error terms that tend to zero as $q \rightarrow \infty$ into a single $O(\epsilon)$ error.  We now fix a spherical 4-point configuration spanning 2 dimensions $X$, parameterize it as
\[
	X = \{0, v, w, av + bw\}
\]
for $v,w$ linearly independent, and search for isometric copies of the form
\[
	\{x, x + y, x + z, x + ay + bz\} \subseteq S
\]
where $|y|^2 = |v|^2$, $|z|^2 = |w|^2$, and $y \cdot z = v \cdot w$.  To detect these copies, we define the spherical measures
\begin{align*}
	\sigma_x(y) &:= \sigma_x^{-|v|^2/2,|v|^2}(y), \text{ and}\\
	\sigma_{x,y}(z) &:= \sigma_{x,y}^{-|w|^2/2, v \cdot w,|w|^2}(z)
\end{align*}
in which case we can parametrize our counting operator as
\[
	\mathcal{N}_{X}(g_0, g_1, g_2, g_3) = \E_{x} g_0(x)\sigma(x) \E_{y,z}^* g_1(x + y)g_2(x + z)g_3(x + ay + bz) \sigma(x)\sigma_x(y)\sigma_{x,y}(z).
\]
Setting
\[
	B_2 := B(\Gamma \cup \{a \cdot \xi : \xi \in \Gamma\} \cup \{b \cdot \xi : \xi \in \Gamma\}, \rho'/4),
\]
we note that restricting $y,z \in B_2 + B_2$ ensures $y,z,ay + bz \in B_1$.  This is useful to us, since for our main term function $f_1$, for any $x \in \mathbf{F}_q^d$ and any $x' \in B_1$, \cref{regularity} provides
\begin{equation}\label{bohrclosure1}
	f_1(x + x') = f_1(x) + O(\epsilon).
\end{equation}
We will in fact work with the further restricted count
\[
	\mathcal{N}_{X}^B(g_0, g_1, g_2, g_3) := \E_{x} g_0\sigma(x)\E_{y,z}^* g_1(x + y)g_2(x + z)g_3(x + ay + bz) \sigma_x(y)\mu_{B_2} * \mu_{B_2}(y)\sigma_{x,y}(z) \mu_{B_2} * \mu_{B_2}(z).
\]
Setting $\beta := |B_2|/q^d$, the size estimate \eqref{bohrsetsize} and the dependency of our parameters guarantees
\[
	\beta^{-1} \leq \left(\frac{C|\Gamma|}{\epsilon \rho}\right)^{3|\Gamma|} \leq C(\alpha).
\]
We should establish that this restricted count is well normalized.  That is, we will show
\begin{equation}\label{bohrCountNormalized1}
	\mathcal{N}_{X}^B(\mathbf{1}, \mathbf{1}, \mathbf{1}, \mathbf{1}) = 1 + O(\epsilon),
\end{equation}
where here $\mathbf{1}$ stands for the constant 1 function.  Applying Parseval and extracting the $\xi = 0$ term,
\[
	\E_z \mu_{B_2} * \mu_{B_2}(z) \sigma_{x,y}(z) = 1 + \sum_{\xi \in \mathbf{F}_q^d \setminus \{0\}} \widehat{\mu}_{B_2}(\xi)^2 \overline{\widehat{\sigma}_{x,y}(\xi)} + O(\epsilon),
\]
so we can apply \cref{mainAsymptotic} for
\[
	\mathcal{N}_{X}^B(\mathbf{1},\mathbf{1},\mathbf{1},\mathbf{1}) = \E_{x,y} \sigma(x)\sigma_{x}(y)\mu_{B_2} * \mu_{B_2}(y) + O\left(\beta^{-1}\sum_{\xi \in \mathbf{F}_q^d \setminus \{0\}} |\widehat{\mu}_{B_2}(\xi)|^2 \E_{x,y} \sigma(x)\sigma_x(y) \delta_{x,y}(\xi)\right) + O(\epsilon).
\]
%%%%% TODO:  Fix?  Continue with argument.
Without trying to be too careful, notice that uniformly in $\xi \neq 0$,
\[
	\E_{x,y} \sigma(x)\sigma_{x}(y)\delta_{x,y}(\xi) \leq q^3 \E_{x,y} \delta_{x,y}(\xi) = O(q^{-1}),
\]
so we can apply Plancherel and ensure $q$ is sufficiently large to conclude
\[
	\mathcal{N}_{X}^B(\mathbf{1},\mathbf{1},\mathbf{1},\mathbf{1}) = \E_{x,y} \sigma(x)\sigma_{x}(y)\mu_{B_2} * \mu_{B_2}(y) + O(\epsilon).
\]
Arguing similarly to eliminate the average in $y$, we conclude \eqref{bohrCountNormalized1}.  This restricted count will be useful for us since we trivially have the lower bound
\begin{equation}\label{triviallowerbound}
	\mathcal{N}_{X}(1_A, 1_A, 1_A, 1_A) \geq \beta^2  \mathcal{N}_{X}^B(1_A, 1_A, 1_A, 1_A),
\end{equation}
and we will spend the rest of the proof establishing the lower bound
\[
	\mathcal{N}^B_{X}(1_A, 1_A, 1_A, 1_A) \geq \alpha^4 + O(\epsilon).
\]
From the decomposition $1_A = f_1 + f_2 + f_3$,
\[
	\mathcal{N}^B_{X}(1_A, 1_A, 1_A, 1_A) = \sum_{1 \leq i_0, i_1, i_2, i_3 \leq 3} \mathcal{N}^B_{X}(f_{i_0}, f_{i_1}, f_{i_2}, f_{i_3}),
\]
and we handle separately the three cases (i) when each of $i_0, i_1, i_2, i_3$ equals 1, (ii) when one of $i_0, i_1, i_2, i_3$ equals 2, and (iii) when one of $i_0, i_1, i_2, i_3$ equals 3.  Case (i) will yield our main term of $\alpha^4$, and we will argue that cases (ii) and (iii) contribute a negligible $O(\epsilon)$ error.

For the first case, applying \eqref{bohrclosure1} and \eqref{bohrCountNormalized1} yields
\[
	\mathcal{N}^B_{X}(f_1, f_1, f_1, f_1) = \E_x f_1(x)^4\sigma(x) \E_{y,z} \mu_{B_2} * \mu_{B_2}(y) \sigma_x(y) \mu_{B_2} * \mu_{B_2}(z) \sigma_{x,y}(z) + O(\epsilon).
\]
Arguing as we did to establish \eqref{bohrCountNormalized1},
\[
	\mathcal{N}^B_{X}(f_1, f_1, f_1, f_1) = \E_x f_1(x)^4\sigma(x) + O(\epsilon).
\]
Then from H\"{o}lder's inequality,
\[
	\mathcal{N}_{X}^B(f_1, f_1, f_1, f_1) \geq (\E_x f_1(x)\sigma(x))^4 + O(\epsilon),
\]
and writing
\begin{align*}
	\E_x f_1(x)\sigma(x) &= \E_x \nu_B * 1_A(x) \sigma(x)\\
	&= \E_x 1_A(x) \nu_B * \sigma(x),
\end{align*}
we can apply \eqref{fourconvolutions} to conclude
\[
	\mathcal{N}_{X}^B(f_1, f_1, f_1, f_1) \geq \alpha^4 + O(\epsilon).
\]

For the second case, with one of $i_0, i_1, i_2, i_3 = 2$, we apply Fourier inversion to write
\[
	\mu_{B_2} * \mu_{B_2}(y)\mu_{B_2} * \mu_{B_2}(z) = \sum_{\xi_1, \xi_2 \in \mathbf{F}_q^d} \widehat{\mu}_{B_2}(\xi_1)^2\widehat{\mu}_{B_2}(\xi_2)^2\chi(-\xi_1 \cdot y)\chi(-\xi_2 \cdot z).
\]
Setting $\chi_\xi(x) = \chi(\xi \cdot x)$, this allows us to express
\[
	\mathcal{N}_{X}^B(f_{i_0},f_{i_1},f_{i_2},f_{i_3}) =  \sum_{\xi_1, \xi_2 \in \mathbf{F}_q^d} \widehat{\mu}_{B_2}(\xi_1)^2\widehat{\mu}_{B_2}(\xi_2)^2 \mathcal{T}_{X,S}(f_{i_0}\chi_{\xi_1 + \xi_2}, f_{i_1}\chi_{-\xi_1}, f_{i_2}\chi_{-\xi_2}, f_{i_3}).
\]
As $\norm{f\chi_\xi}_{U^2_\perp(S)} = \norm{f}_{U^2_\perp(S)}$, we can apply the triangle inequality, \cref{gvn} and Plancherel to conclude
\begin{align*}
	|\mathcal{N}_{X}^B(f_{i_0},f_{i_1},f_{i_2},f_{i_3})| &\leq \beta^{-2} \norm{f_2}_{U^2_\perp(S)} + O(\epsilon)\\
	&\leq \left(\frac{C|\Gamma|}{\epsilon\rho}\right)^{6|\Gamma|} \varphi(|\Gamma|^{-1},\rho) + O(\epsilon).
\end{align*}
We see that by taking
\[
	\varphi(|\Gamma|^{-1},\rho) = \epsilon\left(\frac{c\epsilon\rho}{|\Gamma|}\right)^{6|\Gamma|},
\]
we can ensure case (ii) contributes at most $O(\epsilon)$.

For the third case, we will assume $i_0 = 3$, but each case is similar by reindexing in $x_0$.  Applying the triangle inequality and using that $f_{i_1}, f_{i_2}, f_{i_3}$ are each at most 2 in absolute value,
\[
	\mathcal{N}_{X}^B(f_3, f_{i_1},f_{i_2},f_{i_3}) \leq 8 \E_{x} |f_3(x)| \sigma(x) \E_{y,z}^* \sigma_x(y)\mu_{B_2} * \mu_{B_2}(y) \sigma_{x,y}(z)\mu_{B_2} * \mu_{B_2}(z).
\]
Then arguing as we did to establish \eqref{bohrCountNormalized1} we have
\[
	\mathcal{N}_{X}^B(f_3,f_{i_1},f_{i_2},f_{i_3}) \leq 8 \E_{x} |f_3(x)| \sigma(x) + O(\epsilon).
\]
Applying Cauchy-Schwarz and the fact that $\norm{f_3}_{L^2(S)} \leq \epsilon$, we conclude case (iii) again contributes at most an $O(\epsilon)$ error term.  In total we have shown
\[
	\mathcal{N}^B_{X}(1_A, 1_A, 1_A, 1_A) \geq \alpha^4 + O(\epsilon)
\]
which we can insist is at least $\alpha^4/2$ by taking $\epsilon$ sufficiently small with respect to $\alpha$.  Then since $\beta \geq c(\alpha)$, we are done by recalling \eqref{triviallowerbound}.
\end{proof}

\subsection{Proof that Propositions \ref{gvn} and \ref{decomposition} imply  \cref{spheretheorem} in its full generality}\label{5.2}

It clearly suffices to establish the following

\begin{thm}
	Let $\alpha \in (0,1)$ and $A \subseteq S$ with $|A| \geq \alpha q^{d - 1}$.  If $d \geq 2k + 6$, $q \geq q(\alpha,k)$, and $X \subseteq \mathbf{F}_q^d$ is a spherical $(k + 2)$-point configuration spanning $k$ dimensions, then
	\[
		\mathcal{N}_X(1_A, \ldots, 1_A) \geq c(\alpha,k).
	\]
	In other words, $A$ contains $c(\alpha,k)q^{(k + 1)d - (k+1)(k + 2)/2}$ isometric copies of $X$.
\end{thm}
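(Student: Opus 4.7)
The plan is to follow the three-step scheme laid out in the $k=2$ proof essentially verbatim, adapting each ingredient to handle $k+2$ points instead of $4$. Fix parameters $\epsilon > 0$ and a function $\varphi : (0,1]^2 \to (0,1)$ increasing in both coordinates, to be chosen at the end only in terms of $\alpha$ and $k$. Apply \cref{decomposition} with $f = 1_A$ and tolerance $\epsilon$ to obtain a regular Bohr set $B = B(\Gamma,\rho)$ with $|\Gamma| \leq C(\epsilon,\varphi)$, $\rho \geq c(\epsilon,\varphi)$, and a decomposition $1_A = f_1 + f_2 + f_3$ with the stated properties. Choose $\rho' \in [\epsilon \rho / (400|\Gamma|), \epsilon \rho / (200|\Gamma|)]$ so that $B_1 := B(\Gamma, \rho') \prec_\epsilon B$, and define the enlarged Bohr set
\[
B_2 := B\bigl(\Gamma \cup \{a_j \cdot \xi : \xi \in \Gamma,\ 1 \leq j \leq k\}, \rho'/(k+1)\bigr),
\]
so that any $y_j \in B_2 + B_2$ and any combination $a_1 y_1 + \cdots + a_k y_k$ with $y_j \in B_2 + B_2$ lies in $B_1$; in particular \cref{regularity} gives $f_1(x + x') = f_1(x) + O(\epsilon)$ for any such shift $x'$.

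Next I define the restricted counting operator $\mathcal{N}_X^B(g_0, \ldots, g_{k+1})$ by inserting the factor $\prod_{j=1}^k \mu_{B_2} * \mu_{B_2}(y_j)$ into \eqref{thecount}, and verify the normalization $\mathcal{N}_X^B(\mathbf{1}, \ldots, \mathbf{1}) = 1 + O(\epsilon)$. This is done by peeling off the integrations in $y_k, y_{k-1}, \ldots, y_1, x$ one at a time: at stage $j$ I use Fourier inversion on $\mu_{B_2} * \mu_{B_2}(y_j)$, Plancherel, extract the $\xi = 0$ term, and control the remainder by $\beta^{-1} \cdot q^{-1}$ via \cref{mainAsymptotic} applied to $\sigma_{x, y_1, \ldots, y_{j-1}}(y_j)$; this requires $d \geq 2(k+1)$, which is well within the hypothesis $d \geq 2k+6$. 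The contribution from linearly dependent $y_1, \ldots, y_k$ is a trivial $O(q^{-1})$ loss as noted after \eqref{thecount}, absorbed into $O(\epsilon)$ by taking $q$ large. Writing $\beta = |B_2|/q^d$, the size bound \eqref{bohrsetsize} gives $\beta^{-1} \leq (Ck|\Gamma|/(\epsilon\rho))^{(k+1)|\Gamma|} \leq C(\alpha,k)$, and I then have the trivial lower bound $\mathcal{N}_X(1_A, \ldots, 1_A) \geq \beta^k \mathcal{N}_X^B(1_A, \ldots, 1_A)$.

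It remains to show $\mathcal{N}_X^B(1_A, \ldots, 1_A) \geq \alpha^{k+2} + O(\epsilon)$ by expanding $1_A = f_1 + f_2 + f_3$ and sorting the $3^{k+2}$ resulting terms into three cases. The all-$f_1$ term collapses, via $f_1(x + x') = f_1(x) + O(\epsilon)$ and the normalization computation above, to $\E_x f_1(x)^{k+2}\sigma(x) + O(\epsilon)$; H\"older then bounds this below by $(\E_x f_1\sigma)^{k+2}$, and \eqref{fourconvolutions} together with $f_1 = \nu_B * (1_A \sigma)$ shows $\E_x f_1 \sigma = \alpha + O(q^{-1/2})$, giving the desired $\alpha^{k+2} + O(\epsilon)$. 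For any term with at least one $f_2$ slot, I expand each $\mu_{B_2} * \mu_{B_2}(y_j)$ by Fourier inversion as $\sum_\xi \widehat{\mu}_{B_2}(\xi)^2 \chi_{-\xi}(y_j)$, absorb the resulting characters into the $f_j$'s (using translation invariance $\norm{f\chi_\xi}_{U^2_\perp(S)} = \norm{f}_{U^2_\perp(S)}$), apply \cref{gvn} pointwise in the $\xi$'s, and use Plancherel to collapse the sums; this yields a bound $\beta^{-k}\,\norm{f_2}_{U^2_\perp(S)} + O(q^{-1/8}) \leq \beta^{-k}\varphi(|\Gamma|^{-1}, \rho) + O(\epsilon)$. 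Choosing
\[
\varphi(t, \rho) = \epsilon \Bigl(\frac{c\,\epsilon\, \rho\, t}{k}\Bigr)^{C(k+1)|\Gamma|}
\]
(with $t = |\Gamma|^{-1}$ read off the output of \cref{decomposition}) makes this $O(\epsilon)$. Finally, any term with an $f_3$ factor is handled by moving all absolute values inside, bounding the other $f_{i_j}$ by $2$, and repeating the normalization peel-off to reduce to $2^{k+1} \E_x |f_3(x)| \sigma(x) + O(\epsilon)$; Cauchy-Schwarz against $\sigma$ and $\norm{f_3}_{L^2(S)} \leq \epsilon$ contributes $O(\epsilon)$.

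The main obstacle, beyond the $k=2$ bookkeeping, is ensuring the stage-by-stage normalization argument for $\mathcal{N}_X^B$ goes through uniformly in the previous linearly independent vectors: each time I remove one $\mu_{B_2}*\mu_{B_2}(y_j)$ I need $|\widehat{\sigma}_{x, y_1, \ldots, y_{j-1}}(\xi)| = \delta_{x, y_1, \ldots, y_{j-1}}(\xi) + O(q^{-1/2})$ uniformly, and the $\delta$-supported contribution must be shown negligible because spanning a $j$-dimensional subspace costs $q^{-(d-j)}$, which is absorbed comfortably by the hypothesis $d \geq 2k+6$. Once this is done correctly, the three-case argument is essentially identical to the $k=2$ case, only the number $4$ gets replaced by $k+2$ in the H\"older step, and the implicit constants in \textbf{Case (ii)} and \textbf{Case (iii)} pick up $k$-dependent factors that are harmless since $k$ is fixed.
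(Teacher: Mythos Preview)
Your proposal is correct and follows essentially the same approach as the paper's proof in Section~5.2: the same restricted count $\mathcal{N}_X^B$, the same peel-off normalization via \cref{mainAsymptotic}, and the same three-case analysis using \cref{gvn} and \cref{decomposition}. One trivial arithmetic slip: your radius $\rho'/(k+1)$ for $B_2$ is too large to force $a_1y_1+\cdots+a_ky_k\in B_1$ (you get a bound of $2k\rho'/(k+1)>\rho'$ for $k\geq 2$); the paper uses $\rho'/(2k)$, which works, and this change affects nothing else in your argument.
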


\begin{proof}
The proof is essentially the same as for 4-point configurations spanning 2 dimensions, although the notation becomes more cumbersome.  As before, to set up, let $\epsilon > 0$ be a parameter to be determined only in terms of $\alpha$ and $k$ and let $\varphi : (0,1]^2 \rightarrow (0,1)$ be a function increasing in both coordinates to be determined only in terms of $\epsilon$.  We apply the decomposition theorem to obtain a regular Bohr set $B = B(\Gamma,\rho)$ with $\abs{\Gamma} \leq C(\epsilon,\varphi)$ and $\rho \geq c(\epsilon,\varphi)$ and functions $f_1, f_2, f_3 : \mathbf{F}_q^d \rightarrow [-2,2]$ such that
\begin{align*}
	1_A &= f_1 + f_2 + f_3\\
	f_1 &= \nu_B * (1_A\sigma)\\
	\norm{f_2}_{U^2_\perp(S)} &\leq \varphi(|\Gamma|^{-1},\rho)\\
	\norm{f_3}_{L^2(S)} &\leq \epsilon.
\end{align*}
We take $\rho' \in [\epsilon \rho / (400|\Gamma|), \epsilon \rho / (200 |\Gamma|)]$ so that $B_1 := B(\Gamma,\rho') \prec_\epsilon B$.  Throughout the argument we will continue to take $q(\alpha,k)$ large enough so that, with parameters other than $q$ fixed, we can absorb error terms that tend to zero as $q \rightarrow \infty$ into a single $O(\epsilon)$ error.  We now fix a spherical $(k + 2)$-point configuration spanning $k$ dimensions $X$ parameterized as
\[
	X = \{0, v_1, \ldots, v_{k + 1}\}
\]
where $v_{k + 1} = a_1v_1 + \cdots + a_kv_k$ and search for isometric copies of the form
\[
	\{x_0, x_0 + x_1, \ldots, x_0 + x_{k + 1}\} \subset S
\]
where $x_{k + 1} = a_1x_1 + \cdots + a_kx_k$ and $x_i \cdot x_j = v_i \cdot v_j$ for $1 \leq i, j \leq k$.  To detect these copies, we define the spherical measures
\[
	\sigma_{x_0, \ldots, x_{j - 1}}(x_j) := (\sigma_{})^{-|v_j|^2/2, v_1 \cdot v_j, \ldots, v_{j - 1} \cdot v_j,|v_j|^2}_{x_0, \ldots, x_{j - 1}}(x_j)
\]
in which case we can parametrize our count as
\[
	\mathcal{N}_{X}(g_0, \ldots, g_{k + 1}) = \E_{x_0} g_0(x_0)\sigma(x_0) \E_{x_1, \ldots, x_k}^* g(x_0 + x_{k + 1}) \prod_{j = 1}^k g(x_0 + x_j) \sigma_{x_0, \ldots, x_{j - 1}}(x_j).
\]
Setting
\[
	B_2 := B\left(\Gamma \cup \bigcup_{j = 1}^k \left\{a_j \cdot \xi : \xi \in \Gamma\right\}, \dfrac{\rho'}{2k}\right),
\]
ensures that if $x_1, \ldots, x_k \in B_2 + B_2$, then $x_1, \ldots, x_{k + 1} \in B_1$.  Again we will use that for $x \in \mathbf{F}_q^d$ and $x' \in B_1$,
\begin{equation}\label{bohrclosure2}
	f_1(x + x') = f_1(x) + O(\epsilon).
\end{equation}
Our restricted count is given by
\[
	\mathcal{N}_{X}^B(g_0, \ldots, g_{k + 1}) = \E_{x_0} g_0(x_0)\sigma(x_0) \E_{x_1, \ldots, x_k}^* g(x_0 + x_{k + 1}) \prod_{j = 1}^k g(x_0 + x_j) \mu_{B_2} * \mu_{B_2}(x_j)\sigma^{(j + 1)}(x_j) .
\]
As before, setting $\beta := |B_2|/q^d$, we have $\beta^{-1} \leq C(\alpha,k)$ and a well-normalized restricted count
\begin{equation}\label{bohrCountNormalized2}
	\mathcal{N}_{X}^B(\mathbf{1}, \ldots, \mathbf{1}) = 1 + O(\epsilon).
\end{equation}
Applying the straightforward lower bound
\begin{equation}\label{triviallowerbound2}
	\mathcal{N}_{X}(1_A, \ldots, 1_A) \geq \beta^k  \mathcal{N}_{X}^B(1_A, \ldots, 1_A),
\end{equation}
we will spend the rest of the proof establishing
\[
	\mathcal{N}_{X}^B(1_A, \ldots, 1_A) \geq \alpha^{k + 2} + O(\epsilon).
\]
From the decomposition $1_A = f_1 + f_2 + f_3$,
\[
	\mathcal{N}_{X}^B(1_A, \ldots, 1_A) = \sum_{1 \leq i_0, \ldots, i_{k + 1} \leq 3} \mathcal{N}_{X}^B(f_{i_0}, \ldots, f_{i_{k + 1}}),
\]
and we handle separately the three cases (i) when each of $i_j$ equals 1, (ii) when one of $i_j$ equals 2, and (iii) when one of $i_j$ equals 3.  Case (i) will yield our main term of $\alpha^{k + 2}$, and we will argue that cases (ii) and (iii) contribute a negligible $O(\epsilon)$ error.

For the first case, applying \eqref{bohrclosure2} and \eqref{bohrCountNormalized2} yields
\[
	\mathcal{N}_{X}^B(f_1, \ldots, f_1) = \E_{x_0} f_1(x_0)^{k + 2}\sigma(x) \E_{x_1, \ldots, x_k} \prod_{j = 1}^k \mu_{B_2} * \mu_{B_2}(x_j)\sigma^{(j + 1)}(x_j)  + O(\epsilon).
\]
Arguing as we did to establish \eqref{bohrCountNormalized1},
\[
	\mathcal{N}_{X}^B(f_1, f_1, f_1, f_1) = \E_x f_1(x)^{k + 2}\sigma(x) + O(\epsilon).
\]
Then from H\"{o}lder's inequality,
\begin{align*}
	\mathcal{N}_{X}^B(f_1, f_1, f_1, f_1) &\geq (\E_x f_1(x)\sigma(x))^{k + 2} + O(\epsilon)\\
	&= (\E_x 1_A(x) \nu_B * \sigma(x))^{k + 2} + O(\epsilon)\\
	&= \alpha^{k + 2} + O(\epsilon).
\end{align*}
For the second case, we apply Fourier inversion to write
\[
	\prod_{j = 1}^k \mu_{B_2}* \mu_{B_2}(x_j) = \sum_{\xi_1, \ldots, \xi_k \in \mathbf{F}_q^d} \prod_{j = 1}^k \widehat{\mu}_{B_2}(\xi_j)^2 \chi(-\xi_j \cdot x_j)
\]
in order to express
\[
	\mathcal{N}_{X}^B(f_{i_0},\ldots,f_{i_{k + 1}}) =  \sum_{\xi_1, \ldots, \xi_k \in \mathbf{F}_q^d} \prod_{j = 1}^k \widehat{\mu}_{B_2}(\xi_j)^2 \mathcal{N}_{X}(f_{i_0}\chi_{\xi_1 + \cdots + \xi_k}, f_{i_1}\chi_{-\xi_1}, \ldots f_{i_k}\chi_{-\xi_k}, f_{i_{k + 1}}).
\]
As before, $\norm{f\chi_\xi}_{U^2_\perp(S)} = \norm{f}_{U^2_\perp(S)}$, so we apply the triangle inequality, \cref{gvn} and Plancherel to conclude
\begin{align*}
	|\mathcal{N}_{X}^B(f_{i_0},\ldots,f_{i_{k + 1}})| &\leq \beta^{-k} \norm{f_2}_{U^2_\perp(S)} + O(\epsilon)\\
	&\leq \left(\frac{C|\Gamma|}{\epsilon\rho}\right)^{C|\Gamma|} \varphi(|\Gamma|^{-1},\rho) + O(\epsilon).
\end{align*}
We see that by taking
\[
	\varphi(|\Gamma|^{-1},\rho) = \epsilon\left(\frac{c\epsilon\rho}{|\Gamma|}\right)^{C|\Gamma|},
\]
we can ensure case (ii) contributes at most $O(\epsilon)$.

For the third case, we assume $i_0 = 3$ and apply the triangle inequality for
\[
	\mathcal{N}_{X}^B(f_3, f_{i_1},\ldots, f_{i_{k + 1}}) \leq 2^{k + 1} \E_{x_0} |f_3(x_0)| \sigma(x_0) \E_{x_1, \ldots, x_k} \prod_{j = 1}^k \mu_{B_2} * \mu_{B_2}(x_j) \sigma^{(j + 1)}(x_j) + O(\epsilon).
\]
Applying Cauchy-Schwarz and the fact that $\norm{f_3}_{L^2(S)} \leq \epsilon$, we conclude case (iii) again contributes at most an $O(\epsilon)$ error term.  In total we have shown
\[
	\mathcal{N}^B_{X}(1_A, \ldots, 1_A) \geq \alpha^{k + 2} + O(\epsilon)
\]
which we can insist is at least $\alpha^{k + 2}/2$ by taking $\epsilon$ sufficiently small with respect to $\alpha$ and $k$.  Then since $\beta \geq c(\alpha,k)$, we are done by recalling \eqref{triviallowerbound2}.
\end{proof}

It remains to prove both \cref{gvn} and \cref{decomposition}.

\section{Proof of \cref{gvn}}\label{gvnproofSection}

We establish \cref{gvn} through two careful applications of Cauchy-Schwarz.  In a sense, each application of Cauchy-Schwarz replaces our configuration with a more regular configuration, and we are left considering averages over geometric rectangles rather than more complicated spherical configurations.

%%%%% BEGIN ADAPTED DISSERTATION PROOF %%%%%
%\begin{proof}[Proof of \cref{gvn}]

We fix a spherical $(k + 2)$-point configuration $X$ spanning $k$ dimensions parameterized by
\[
	X = \{v_0, v_1, \ldots, v_{k + 1}\}
\]
with $v_1 - v_0, \ldots, v_k - v_0$ linearly independent and
\[
	v_{k + 1} = \sum_{j = 0}^{k} a_j v_j
\]
for coefficients $a_j \in \mathbf{F}_q$.  It is enough by symmetry to show
	\[
		|\mathcal{N}_{X}(f_0, \ldots, f_{k + 1})| \leq \norm{f_{k}}_{U^2_\perp(S)} + O(q^{-1/8}).
	\]
	
For $x_0, \ldots, x_k \in \mathbf{F}_q^d$, we will understand that
\[
	x_{k + 1} = \sum_{j = 0}^{k} a_ix_i,
\]
and we will detect if $\{x_0, \ldots, x_{k + 1}\} \simeq X$ with the spherical measures $\sigma(x_0)$ and, for $1 \leq j \leq k$,
\[
	\sigma^{(j)}(x_j) := \sigma^{c_{0,j}, \ldots, c_{j - 1,j}, \lambda}_{x_0, \ldots, x_{j - 1}}(x_j)
\]
where we define $c_{i,j} = |v_i - v_j|^2/2 - \lambda$, since if $|x_i|^2 = |x_j|^2 = \lambda$, checking whether $|x_i - x_j|^2 = |v_i - v_j|^2$ amounts to checking whether $x_i \cdot x_j = |v_i - v_j|^2/2 - \lambda$.  Then we can express our counting operator
\[
	\mathcal{N}_{X}(f_0, \ldots, f_{k + 1}) = \E_{x_0, \ldots, x_k} \sigma(x_0) \prod_{j = 0}^{k + 1} f_j(x_j) \prod_{j = 1}^{k} \sigma^{(j)}(x_j) + O(q^{-1}),
\]
where we have included the negligible amount of linearly dependent vectors.  Rearranging and applying the triangle inequality,
\[
	|\mathcal{N}_{X}(f_0, \ldots, f_{k + 1})| \leq \E_{x_0, \ldots, x_{k - 1}} \sigma(x_0)\prod_{j = 1}^{k - 1} \sigma^{(j)}(x_j) \left|\E_{x_k} f_{k}(x_{k}) f_{k + 1}(x_{k + 1}) \sigma^{(k)}(x_{k})\right| + O(q^{-1}).
\]
Introducing the differencing notation
\[
	\Delta_h f(x) := f(x)\overline{f}(x + h),
\]
we square both sides and apply Cauchy-Schwarz for
\[
	|\mathcal{N}_{X}(f_0, \ldots, f_{k + 1})|^2 \leq \E_{x_0, \ldots, x_{k - 1}} \sigma(x_0)\prod_{j = 1}^{k - 1} \sigma^{(j)}(x_j) \E_{x_k,h} \Delta_h f_{k}(x_{k}) \Delta_h f_{k + 1}(x_{k + 1}) \Delta_h \sigma^{(k)}(x_{k}) + O(q^{-1}).
\]
We introduce new spherical measures with an additional condition involving $h$ defined by
\begin{align*}
\sigma_h(x_0) &:= \sigma_h^0(x_0)\\
\sigma_h^{(j)}(x_j) &:= \sigma_{h,x_0,\ldots,x_{j - 1}}^{0,c_{0,j}, \ldots, c_{j - 1,j}, \lambda}(x_j).
\end{align*}
which allows us to rewrite
\[
	\sigma(x_0) \prod_{j = 1}^{k - 1} \sigma^{(j)}(x_j) \Delta_h \sigma^{(k)}(x_k) = \sigma_h(x_0) \prod_{j = 1}^{k - 1} \sigma_h^{(j)}(x_j) \sigma^{(k)}(x_k) \sigma(x_k + h).
\]
Our bound from above can then be rearranged as
\[
	|\mathcal{N}_{X}(f_0, \ldots, f_{k + 1})|^2 \leq \E_{h,x_0, \ldots, x_{k - 1}} \sigma_h(x_0)\prod_{j = 1}^{k - 1} \sigma_h^{(j)}(x_j) \E_{x_k} \Delta_h f_{k}(x_{k}) \Delta_h f_{k + 1}(x_{k + 1}) \sigma^{(k)}(x_{k})\sigma(x_k + h) + O(q^{-1}).
\]
We claim that in the average above, the dependence of $x_{k + 1}$ on $x_k$ is superficial.  That is, while
\[
	x_{k + 1} = \sum_{j = 0}^{k} a_jx_j,
\]
it must be the case that at least two coefficients $a_j$ are nonzero since we are working with a $(k + 2)$-point spherical configuration.  In particular, $a_j \neq 0$ for some $0 \leq j < k$, and for this distinguished $j$, we can reindex in $x_j$ in the average above to replace $x_{k + 1}$ with
\[
	x_{k + 1}' := \sum_{j = 0}^{k - 1} a_j'x_j,
\]
allowing us to rearrange our bound above as
\[
	|\mathcal{N}_{X}(f_0, \ldots, f_{k + 1})|^2 \leq \E_{h,x_0, \ldots, x_{k - 1}} \Delta_h f_{k + 1}(x_{k + 1}') \sigma_h(x_0)\prod_{j = 1}^{k - 1} \sigma_h^{(j)}(x_j) \E_{x_k} \Delta_h f_{k}(x_{k}) \sigma^{(k)}(x_{k})\sigma(x_k + h) + O(q^{-1}),
\]
where we may have needed to adjust the implicit scalars in our spherical measures.  This allows us to proceed as before by applying the triangle inequality for
\[
	|\mathcal{N}_{X}(f_0, \ldots, f_{k + 1})|^2 \leq \E_{h,x_0, \ldots, x_{k - 1}} \sigma_h(x_0)\prod_{j = 1}^{k - 1} \sigma_h^{(j)}(x_j) \left|\E_{x_k} \Delta_h f_{k}(x_{k}) \sigma^{(k)}(x_{k})\sigma(x_k + h)\right| + O(q^{-1}),
\]
and Cauchy-Schwarz once more for
\[
	|\mathcal{N}_{X}(f_0, \ldots, f_{k + 1})|^4 \leq \E_{h,x_0, \ldots, x_{k - 1}} \sigma_h(x_0)\prod_{j = 1}^{k - 1} \sigma_h^{(j)}(x_j) \E_{x_k,h'} \Delta_{h'} \Delta_h f_{k}(x_{k}) \Delta_{h'} \sigma^{(k)}(x_{k})\Delta_{h'}\sigma(x_k + h) + O(q^{-1}).
\]
We can again reorganize our spherical measures, rewriting
\[
\sigma_h(x_0)\prod_{j = 1}^{k - 1} \sigma_h^{(j)}(x_j) \Delta_{h'} \sigma^{(k)}(x_{k})\Delta_{h'}\sigma(x_k + h) = \Delta_{h'} \Delta_h \sigma(x_k) \sigma_{x_k, h, h'}(x_0) \prod_{j = 1}^{k - 1} \sigma_{x_k,h,h',x_0, \ldots, x_{j - 1}}(x_j)
\]
for appropriate implicit scalars.  Using this to rearrange our bound above, we have
\[
	|\mathcal{N}_{X}(f_0, \ldots, f_{k + 1})|^4 \leq \E_{x_k, h, h'} \Delta_{h'} \Delta_h f_k \sigma(x_k) \E_{x_0, \ldots, x_{k - 1}} \sigma_{x_k, h, h'}(x_0) \prod_{j = 1}^{k - 1} \sigma_{x_k,h,h',x_0, \ldots, x_{j - 1}}(x_j) + O(q^{-1}).
\]
Applying \cref{mainAsymptotic}, we have, uniformly in $x_k, h, h'$ when $d \geq 2k + 6$,
\[
	\E_{x_0, \ldots, x_{k - 1}} \sigma_{x_k, h, h'}(x_0) \prod_{j = 1}^{k - 1} \sigma_{x_k,h,h',x_0, \ldots, x_{j - 1}}(x_j) = 1 + O(q^{-1/2}),
\]
establishing
\[
|\mathcal{N}_{X}(f_0, \ldots, f_{k + 1})|^4 \leq \E_{x_k, h, h'} \Delta_{h'} \Delta_h f_k \sigma(x_k) + O(q^{-1/2})
\]
from which we see $|\mathcal{N}_{X}(f_0, \ldots, f_{k + 1})| \leq \norm{f_k}_{U^2_\perp(S)} + O(q^{-1/8})$ as required. \qed

\section{An Inverse Theorem and Proof of \cref{decomposition}}\label{last}

This section is dedicated to establishing \cref{decomposition}. We begin in \cref{inverseSection} by establishing \cref{inverse}, an \emph{inverse theorem} which reveals that functions with large $U^2_\perp(S)$ norm must exhibit Fourier bias. 

\subsection{An Inverse Theorem}\label{inverseSection}
 One way to see that functions with large $U^2_\perp(S)$ norm exhibit Fourier bias is to relate the $U^2_\perp(S)$ norm to the usual $U^2(\mathbf{F}_q^d)$ norm.  In order to do so, define for $v,w \in \mathbf{F}_q^d$ the normalized indicator function
\[
	\ell_{v=w} = \begin{cases} q^d & \text{if } v = w \\ 0 & \text{otherwise} \end{cases}
\]
Then one can express
\[
	\norm{f}_{U^2_\perp(S)}^4 = \E_{x,y,z,w} \overline{f}\sigma(x)f\sigma(y)f\sigma(z)\overline{f}\sigma(w) \ell_{x + w = y + z}
\]
Expanding $\ell_{x + w = y + z}$ via orthogonality,
\[
	\ell_{x + w = y + z} = \sum_{\xi \in \mathbf{F}_q^d} \chi(\xi(x + w - y - z)),
\]
providing the identity
\[
	\norm{f}_{U^2_\perp(S)}^4 = \sum_{\xi \in \mathbf{F}_q^d} |\widehat{f\sigma}(\xi)|^4.
\]
The right hand side is precisely $\norm{f \sigma}_{U^2(\mathbf{F}_q^d)}$.  It is tempting to here use Plancherel to bound $\norm{f}_{U^2_\perp(S)}^4$ above by $\sup_{\xi \in \mathbf{F}_q^d} |\widehat{f\sigma}(\xi)|^2 \norm{f\sigma}_{L^2}^2$, but this is not generally helpful since the $L^2$ term may grow with $q$.  By being a bit more careful, we establish the following inverse theorem.

\begin{thm}\label{inverse}
	Let $f : \mathbf{F}_q^d \rightarrow \mathbf{C}$ with $|f| \leq 1$.  If $d \geq 8$, then
	\[
		\norm{f}_{U^2_\perp(S)} \leq \sup_{\xi \in \mathbf{F}_q^d} \abs{\widehat{f \sigma}(\xi)}^{1/4} + O(q^{-1/32}).
	\]
\end{thm}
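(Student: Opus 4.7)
The main identity $\norm{f}_{U^2_\perp(S)}^4 = \sum_\xi |\widehat{f\sigma}(\xi)|^4$ has been established in the excerpt. Setting $g := f\sigma$ and $M := \sup_\xi |\widehat g(\xi)|$, my plan is to reduce the inverse theorem to the estimate
\[
\sum_\xi |\widehat g(\xi)|^4 \leq M + O(q^{-1/8}),
\]
from which the bound $\norm{f}_{U^2_\perp(S)} \leq M^{1/4} + O(q^{-1/32})$ follows by taking fourth roots and using $(a+b)^{1/4} \leq a^{1/4} + b^{1/4}$. The naive Plancherel estimate $\sum |\widehat g|^4 \leq M^2 \norm{g}_{L^2}^2 \leq qM^2$ is off by a factor of $qM$, precisely because $\norm{g}_{L^2}^2 = q \norm{f}_{L^2(S)}^2$ can be of size $q$ even though $|f| \leq 1$; the excerpt explicitly flags this as the issue that being ``a bit more careful'' must overcome.

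To refine this, I would recast the fourth moment as a physical-space $L^2$ quantity by exploiting the non-negativity of $|\widehat g|^2 = \widehat{g \ast \tilde g}$ (with $\tilde g(x) := \overline{g(-x)}$), so that Parseval yields $\sum_\xi |\widehat g(\xi)|^4 = \E_x |c_g(x)|^2$ for the autocorrelation $c_g := g \ast \tilde g$. The bound $|f| \leq 1$ gives pointwise $|c_g(x)| \leq (\sigma \ast \sigma)(x)$, and the sphere Fourier asymptotics \eqref{sphere} yield $(\sigma \ast \sigma)(x) = \widehat\sigma(0)^2 + O(q^{-1/2}) = 1 + O(q^{-1/2})$ generically, with $(\sigma \ast \sigma)(0) \leq q$. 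The $x = 0$ contribution to $\E_x |c_g|^2$ is at most $|c_g(0)|^2/q^d \leq q^{2 - d}$, which is $O(q^{-6})$ once $d \geq 8$ and can be absorbed into the error. For the generic $x$, I would combine the pointwise control $|c_g(x)| \lesssim 1$ with the Fourier $\ell^\infty$ bound $\widehat{c_g}(\xi) \leq M^2$, splitting the Fourier representation $c_g(x) = \sum_\xi \widehat{c_g}(\xi) \chi(\xi x)$ into its peak at $\xi = 0$ and a remainder that inherits the decay from $\widehat\sigma$, yielding an inequality of the shape
\[
\E_x |c_g(x)|^2 \leq M \cdot \E_x (\sigma \ast \sigma)(x) + O(q^{-1/8}) = M + O(q^{-1/8}).
\]

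The hard part is executing this last step: the trivial Plancherel bound loses a multiplicative factor of $qM$, and recovering it requires delicately balancing the dual bounds (pointwise $|c_g| \lesssim 1$ coming from sphere-sphere intersection counts, and $\widehat{c_g} \leq M^2$ coming from the hypothesis) and accounting for the $O(q^{-1/2})$ errors from the sphere Fourier asymptotics. The dimension hypothesis $d \geq 8$ enters both to make the $x = 0$ contribution $q^{2-d}$ negligible and to ensure that the various error terms coming from intersections of $S$ with hyperplanes through the origin (invoked via \cref{mainAsymptotic}, which needs $d \geq 2j$) are all of size at most $O(q^{-1/8})$, as required to make the fourth-root error $O(q^{-1/32})$ in the final statement.
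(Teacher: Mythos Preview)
Your reduction to the target estimate $\sum_\xi |\widehat g(\xi)|^4 \leq M + O(q^{-1/8})$ is correct, and the pointwise bound $|c_g(x)| \leq (\sigma\ast\sigma)(x) = 1 + O(q^{-1/2})$ for $x\neq 0$ (which follows from \cref{mainAsymptotic} with $j=2$) is genuinely useful. But the step you flag as ``the hard part'' is a real gap, and the mechanism you sketch for it does not work. The claim that the tail of the Fourier expansion $c_g(x)=\sum_\xi |\widehat g(\xi)|^2\chi(\xi\cdot x)$ ``inherits the decay from $\widehat\sigma$'' is not justified: the coefficients $|\widehat g(\xi)|^2$ have no a priori relationship to $\widehat\sigma(\xi)$ for general $f$. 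What your ingredients \emph{do} prove is the bound
\[
\sum_\xi |\widehat g(\xi)|^4 \;=\; \E_x |c_g(x)|^2 \;\leq\; \sup_{x\neq 0}|c_g(x)|\cdot \E_x|c_g(x)| + q^{2-d} \;\leq\; 1 + O(q^{-1/2}),
\]
since $\E_x|c_g(x)|\leq \E_x(\sigma\ast\sigma)(x)=\widehat\sigma(0)^2$. This is exactly the estimate $\|f\|_{U^2_\perp(S)}^4\leq 1+O(q^{-1/2})$, which is necessary but strictly weaker than $\leq M + O(q^{-1/8})$ when $M$ is bounded away from $1$. I do not see how to close the gap between $1$ and $M$ using only the two bounds $|c_g|\lesssim 1$ and $\widehat{c_g}\leq M^2$.

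The paper resolves this by a different route: rather than bounding $\sum_\xi|\widehat g(\xi)|^4$ directly, it applies Cauchy--Schwarz twice on the physical side (each time against a weight built from $\sigma$, whose square is controlled by \cref{mainAsymptotic}; this is where $d\geq 8$ enters) to reach
\[
\|f\|_{U^2_\perp(S)}^{16} \;\leq\; \sum_\xi |\widehat g(\xi)|^{8} + O(q^{-1/2}).
\]
At the $\ell^8$ level the trivial factorisation $\sum|\widehat g|^8 \leq M^4\sum|\widehat g|^4$ now pairs $M^4$ with the \emph{bounded} $\ell^4$ sum rather than with the large $\ell^2$ sum. The final input $\sum|\widehat g|^4\leq 1+O(q^{-1/2})$ is precisely the estimate your autocorrelation argument supplies. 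So you have one half of the paper's proof; what is missing is the Cauchy--Schwarz bootstrap from $\ell^4$ to $\ell^8$, which is the device that converts the unusable bound $M^2\cdot\|g\|_2^2$ into the usable bound $M^4\cdot\|f\|_{U^2_\perp(S)}^4$.
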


\begin{proof}
We apply absolute values and the triangle inequality for
\[
	\norm{f}_{U^2_\perp(S)}^4 \leq \E_{x} \sigma(x)\abs{\E_{y,z,w} \overline{f}\sigma(y)\overline{f}\sigma(z)f\sigma(w)\ell_{x = y + z - w}}
\]
Applying Cauchy-Schwarz,
\[
	\norm{f}_{U^2_\perp(S)}^8 \leq \left(\E_x \sigma(x)\right) \left(\E_x \sigma(x) \abs{\E_{y,z,w} \overline{f}\sigma(y)\overline{f}\sigma(z)f\sigma(w)\ell_{x = y + z - w}}^2\right).
\]
For any fixed $x$ and $y$, we have
\[
	\E_{z,w} \sigma(z)\sigma(w)\ell_{x = y + z - w} = \E_{z} \sigma_{x + y}(z)
\]
for some measure $\sigma_{x + y}$.  Then since $d \geq 4$, we can apply \eqref{sphere} and \cref{mainAsymptotic} for
\[
	\norm{f}_{U^2_\perp(S)}^8 \leq \E_x \sigma(x) \E_{\substack{y_1,z_1,w_1 \\ y_2,z_2,w_2}} \overline{f}\sigma(y_1)f\sigma(y_2)\overline{f}\sigma(z_1)f\sigma(z_2)f\sigma(w_1)\overline{f}\sigma(w_2)\ell_{x = y_1 + z_1 - w_1}\ell_{x = y_2 + z_2 - w_2} + O(q^{-1/2})
\]
Moving the average in $x$ inside and rearranging a bit, this simplifies to
\[
	\norm{f}_{U^2_\perp(S)}^8 \leq \E_{\substack{y_1,z_1,w_1 \\ y_2,z_2,w_2}} \overline{f}\sigma(y_1)f\sigma(y_2)\overline{f}\sigma(z_1)f\sigma(z_2)f\sigma(w_1)\overline{f}\sigma(w_2)\sigma(y_1 + z_1 - w_1)\ell_{w_2 - w_1 = y_2 - y_1 + z_2 - z_1} + O(q^{-1/2})
\]
Setting $f_1 = f$ and $f_2 = \overline{f}$, rearranging a bit more and applying the triangle inequality,
\[
	\norm{f}_{U^2_\perp(S)}^8 \leq \E_{w_1,w_2} \sigma(w_1)\sigma(w_2) \abs{\E_{y_1, y_2, z_1, z_2} \sigma(y_1 + z_1 - w_1) \ell_{w_2 - w_1 = y_2 - y_1 + z_2 - z_1} \prod_{j = 1,2} f_j\sigma(y_j)f_j\sigma(z_j)} + O(q^{-1/2})
\]
Setting $f_3 = f$ and $f_4 = \overline{f}$ and again requiring $d \geq 4$, we apply Cauchy-Schwarz as before and rearrange for
\begin{equation}\label{endofinverse}
	\norm{f}_{U^2_\perp(S)}^{16} \leq  \E_{\substack{y_1, \ldots, y_4 \\ z_1, \ldots, z_4}} \prod_{j = 1}^4 f_j\sigma(y_j)f_j\sigma(z_j) \ell_{y_2 - y_1 + z_2 - z_1 = y_4 - y_3 + z_4 - z_3} W_{\substack{y_1, \ldots, y_4 \\ z_1, \ldots, z_4}} + O(q^{-1/2})
\end{equation}
where
\[
W_{\substack{y_1, \ldots, y_4 \\ z_1, \ldots, z_4}} = \E_{w} \sigma(w)\sigma(y_2 - y_1 + z_2 - z_1 + w)\sigma(y_1 + z_1 - w)\sigma(y_3 + z_3 - w)
\]
Since $d \geq 4$, we can restrict the sum in \eqref{endofinverse} to only consider the terms when the vectors $y_2 - y_1 + z_2 - z_1, y_1 + z_1, y_3 + z_3$ are linearly independent at the cost of an error that can be absorbed in our current error of $O(q^{-1/2})$.  For these vectors, the function
\[
	\sigma'(w) = \sigma(w)\sigma(y_2 - y_1 + z_2 - z_1 + w)\sigma(y_1 + z_1 - w)\sigma(y_3 + z_3 - w)
\]
is a measure for which \cref{mainAsymptotic} applies, allowing us to conclude
\[
	W_{\substack{y_1, \ldots, y_4 \\ z_1, \ldots, z_4}} = \E_w \sigma'(w) = 1 + O(q^{-1/2}),
\]
valid for $d \geq 8$.  We have arrived at the estimate
\[
	\norm{f}_{U^2_\perp(S)}^{16} \leq \E_{\substack{y_1, \ldots, y_4 \\ z_1, \ldots, z_4}} \prod_{j = 1}^4 f_j\sigma(y_j)f_j\sigma(z_j) \ell_{y_2 - y_1 + z_2 - z_1 = y_4 - y_3 + z_4 - z_3} + O(q^{-1/2})
\]
Expanding $\ell_{y_2 - y_1 + z_2 - z_1 = y_4 - y_3 + z_4 - z_3}$ via orthogonality, we have
\[
	\ell_{y_2 - y_1 + z_2 - z_1 = y_4 - y_3 + z_4 - z_3} = \sum_{\xi \in \mathbf{F}_q^d} \chi(\xi \cdot (y_4 - y_3 - y_2 + y_1 + z_4 - z_3 - z_2 + z_1)),
\]
from which we have the identity
\[
	\E_{\substack{y_1, \ldots, y_4 \\ z_1, \ldots, z_4}} \prod_{j = 1}^4 f_j\sigma(y_j)f_j\sigma(z_j) \ell_{y_2 - y_1 + z_2 - z_1 = y_4 - y_3 + z_4 - z_3} = \sum_{\xi \in \mathbf{F}_q^d} |\widehat{f\sigma}(\xi)|^8
\]
It follows that we can conclude
\begin{align*}
	\norm{f}_{U^2_\perp(S)}^{16} &\leq \sum_{\xi \in \mathbf{F}_q^d} |\widehat{f\sigma}(\xi)|^8 + O(q^{-1/2})\\
	&\leq \sup_{\xi \in \mathbf{F}_q^d} |\widehat{f\sigma}(\xi)|^4 \sum_{\xi \in \mathbf{F}_q^d} |\widehat{f\sigma}(\xi)|^4 + O(q^{-1/2})\\
	&\leq \sup_{\xi \in \mathbf{F}_q^d} |\widehat{f\sigma}(\xi)|^4 + O(q^{-1/2}),
\end{align*}
where we have used both $|\widehat{f\sigma}(\xi)| \leq 1 + O(q^{-1/2})$ and $\sum_{\xi \in \mathbf{F}_q^d} |\widehat{f\sigma}(\xi)|^4 = \norm{f}_{U^2_\perp(S)}^4 \leq 1 + O(q^{-1/2})$.
\end{proof}

\subsection{Proof of \cref{decomposition}}\label{arproofSection}

In order to use the Fourier bias from \cref{inverse} to prove \cref{decomposition}, we will construct a sequence of Bohr sets, refining at each stage, until we arrive at $B' \prec_\epsilon B$ for which $\nu_{B'} * (f\sigma)$ and $\nu_B * (f\sigma)$ are close in an $L^2$ sense.  This will require a number of technical lemmas, the first of which indicates that these two convolutions are somewhat orthogonal.

\begin{lem}\label{pythagoras}
	Let $\epsilon \in (0,1)$, $B = B(\Gamma,\rho)$ a regular Bohr set with $B' \prec_\epsilon B$, and  $f : \mathbf{F}_q^d \rightarrow \mathbf{C}$ with $|f| \leq 1$.  If $d \geq 2$ and $q \geq q(|\Gamma|,\rho,\epsilon)$, then
	\[
		\norm{\nu_{B'} * (f\sigma)}_2^2 - \norm{\nu_B * (f\sigma)}_2^2 \geq \norm{\nu_{B'} * (f\sigma) - \nu_B * (f\sigma)}_2^2 + O(\epsilon).
	\]
\end{lem}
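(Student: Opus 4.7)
The plan is to reduce the inequality to the approximate identity
\[
\langle g, \nu_{B'} * \nu_B * g\rangle = \langle g, \nu_B * g\rangle + O(\epsilon), \qquad g := f\sigma,
\]
and to establish this identity by exploiting that $h := \nu_B * (f\sigma)$ is both bounded and approximately Lipschitz on $B'$ when $q$ is sufficiently large. Expanding $\norm{\nu_{B'} * g - \nu_B * g}_2^2$ and using that $\nu_B$ and $\nu_{B'}$ are real and symmetric (so the associated convolution operators are self-adjoint and commute), the inequality claimed in the lemma is equivalent to $\langle g, \nu_{B'} * \nu_B * g\rangle \geq \norm{\nu_B * g}_2^2 + O(\epsilon)$. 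On the other hand, Plancherel gives the identity
\[
\langle g, \nu_B * g\rangle - \norm{\nu_B * g}_2^2 = \sum_\xi \widehat{\nu_B}(\xi)\bigl(1 - \widehat{\nu_B}(\xi)\bigr)|\widehat{g}(\xi)|^2 \geq 0,
\]
where non-negativity follows from $\widehat{\nu_B} = \widehat{\mu_B}^4 \in [0,1]$. Combining these two reductions, it will suffice to prove the approximate identity displayed above.

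To do so, I first observe that $|h| \leq \nu_B * \sigma = 1 + O(\beta^{-1} q^{-1/2})$ by \eqref{fourconvolutions}, and writing $h = \mu_B * F$ with $F := \mu_B^{*3} * (f\sigma)$, the analogous estimate yields $|F| \leq \mu_B^{*3} * \sigma = 1 + O(\beta^{-1} q^{-1/2}) \leq 2$ once $q$ is sufficiently large with respect to $|\Gamma|$ and $\rho$. Applying \cref{regularity} to the normalized function $F/\norm{F}_\infty$ then gives that $h$ is approximately Lipschitz on $B'$: for every $x$ and every $y \in B'$, $|h(x) - h(x+y)| = O(\epsilon)$.

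This Lipschitz bound implies that each convolution of $h$ with $\mu_{B'}$ alters it by at most $O(\epsilon)$ in the sup norm while preserving the same Lipschitz property on $B'$, so iterating four times (since $\nu_{B'} = \mu_{B'}^{*4}$) yields $\norm{\nu_{B'} * h - h}_\infty = O(\epsilon)$. Pairing against $g$ via duality, $|\langle g, \nu_{B'} * h - h\rangle| \leq \norm{g}_1 \cdot \norm{\nu_{B'} * h - h}_\infty \leq \norm{\sigma}_1 \cdot O(\epsilon) = O(\epsilon)$, which completes the proof of the desired identity.

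The main obstacle is establishing the Lipschitz property of $h$: the naive bound $|f\sigma| \leq q$ would produce a factor of $q$ in the Lipschitz constant and render the argument useless, so it is essential to first push $\sigma$ through a single factor of $\mu_B$ and work instead with the bounded surrogate $F = \mu_B^{*3} * (f\sigma)$. This step relies on the Fourier asymptotic $\mu_B^{*3} * \sigma \approx 1$ and is precisely what forces the hypothesis $q \geq q(|\Gamma|, \rho, \epsilon)$.
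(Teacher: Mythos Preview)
Your argument is correct and complete, but it takes a different route from the paper. The paper works entirely on the Fourier side: after the same square expansion reducing to $\langle \nu_{B'}*g,\nu_B*g\rangle \geq \|\nu_B*g\|_2^2 + O(\epsilon)$, it applies \cref{regularity} to the characters $x\mapsto\chi(\xi\cdot x)$ to obtain the pointwise Fourier inequality $|\widehat{\mu_B}(\xi)| \leq |\widehat{\mu_{B'}}(\xi)| + \epsilon$, and then expands $|\widehat{\mu_B}|^8 \leq |\widehat{\mu_B}|^4(|\widehat{\mu_{B'}}|+\epsilon)^4$ inside Plancherel, controlling the cross terms via $\sum_\xi|\widehat{\mu_B}(\xi)|^4|\widehat{f\sigma}(\xi)|^2 = \|\mu_B*\mu_B*(f\sigma)\|_2^2 = O(1)$.

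Your approach is physical-space: you split off the positivity $\langle g,\nu_B*g\rangle \geq \|\nu_B*g\|_2^2$ (which the paper handles implicitly through $|\widehat{\mu_B}|^8\leq|\widehat{\mu_B}|^4$), and then prove $\nu_{B'}*h \approx h$ in $L^\infty$ by writing $h=\mu_B*F$ with $F$ bounded and applying \cref{regularity} to $F$ rather than to characters. The two are dual uses of the same regularity lemma. Your version makes the ``approximate invariance under further smoothing'' picture explicit and gives a slightly stronger conclusion ($\|\nu_{B'}*h-h\|_\infty=O(\epsilon)$), at the cost of the extra positivity observation; the paper's version is more compact and stays within a single Plancherel computation.
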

	\begin{proof}
		By expanding the square, $\norm{\nu_{B'} * (f\sigma) - \nu_B * (f\sigma)}_2^2$ is equal to
		\[
		\norm{\nu_{B'} * (f\sigma)}_2^2 - 2\E_x\nu_{B'} * (f\sigma)(x)\nu_B * (f\sigma)(x) + \norm{\nu_B * (f\sigma)}_2^2,
		\]
		so it suffices to show that
		\[
		\norm{\nu_B * (f\sigma)}_2^2 \leq \E_x\nu_{B'} * (f\sigma)(x)\nu_B * (f\sigma)(x) + O(\epsilon)
		\]
		Using the relationship $B' \prec_\epsilon B$, we can apply \cref{regularity} to see
		\begin{align*}
			\abs{\widehat{\mu}_{B}(\xi)} &= \abs{\E_{x \in B} \chi(x \cdot \xi)}\\
			&\leq \abs{\E_{x \in B} \E_{y \in B'} \chi((x + y) \cdot \xi)} + \epsilon\\
			&= \abs{\E_{x \in B} \chi(x \cdot \xi) \E_{y \in B'} \chi(y \cdot \xi)} + \epsilon\\
			&\leq \abs{\widehat{\mu}_{B'}(\xi)} + \epsilon.
		\end{align*}		
		in which case we apply Plancherel and this observation for
		\[
		\norm{\nu_B * (f\sigma)}_2^2 = \sum_{\xi \in \mathbf{F}_q^d} \abs{\widehat{\mu}_B(\xi)}^8 \abs{\widehat{f\sigma}(\xi)}^2 \leq \sum_{\xi \in \mathbf{F}_q^d} \abs{\widehat{\mu}_B(\xi)}^4 \left(\abs{\widehat{\mu}_{B'}(\xi)} + \epsilon\right)^4 \abs{\widehat{f\sigma}(\xi)}^2
		\]
	Writing $\beta = |B|/q^d$, we can apply Plancherel and \eqref{twoconvolutions} for
	\[
	\sum_{\xi \in \mathbf{F}_q^d} |\widehat{\mu_B}(\xi)|^4 |\widehat{f\sigma}(\xi)|^2 = \norm{\mu_B * \mu_B * f\sigma(x)}_2^2 \leq 1 + O(\beta^{-1}q^{-1/2}).
	\]
	Using the bound \eqref{bohrsetsize}, we see that we can take $q$ sufficiently large and apply Plancherel again to conclude
	\begin{align*}
		\norm{\nu_B * f\sigma}_2^2 &\leq \sum_{\xi \in \mathbf{F}_q^d} |\widehat{\mu}_B(\xi)|^4|\widehat{\mu}_{B'}(\xi)|^4 |\widehat{f\sigma}(\xi)|^2 + O(\epsilon)\\
		&= \E_x \nu_{B'} * (f\sigma)(x)\nu_B*(f\sigma)(x) + O(\epsilon).\qedhere
	\end{align*}
	\end{proof}

With \cref{pythagoras} in hand, we are ready to translate \cref{inverse} into an energy increment, showing that Fourier bias leads to a Bohr set refinement with increased $L^2$ energy.

\begin{prop}\label{energyincrement} Let $\eta \in (0,1)$, $\epsilon \in (0,c\eta^8)$ for $c > 0$ sufficiently small, $B = B(\Gamma,\rho)$ a regular Bohr set, and $f : \mathbf{F}_q^d \rightarrow \mathbf{C}$ with $|f| \leq 1$.  If $d \geq 2$, $q \geq q(|\Gamma|,\rho,\eta,\epsilon)$, and
\[
	\norm{f - \nu_B * (f\sigma)}_{U^2_\perp(S)} \geq \eta,
\]
then there exists $B' = B(\Gamma',\rho')$ with $B' \prec_{\epsilon} B$, $\abs{\Gamma'} \leq \abs{\Gamma} + 1$, $\rho' \geq c(|\Gamma|,\epsilon) \rho$ and
\[
	\norm{\nu_{B'} * (f\sigma)}_{L^2(S)}^2 \geq \norm{\nu_{B} * (f\sigma)}_{L^2(S)}^2 + c\eta^8.
\]
\end{prop}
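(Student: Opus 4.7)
The plan is to use \cref{inverse} to extract a single frequency $\xi_0$ off which the residual $g := f - \nu_B * (f\sigma)$ has large Fourier mass, adjoin $\xi_0$ to the Bohr spectrum, and convert the resulting Fourier gain into an $L^2$ energy increment via \cref{pythagoras}. Since $|g| \le |f| + \nu_B * \sigma \le 2 + O(\beta^{-1} q^{-1/2}) \le 3$ for $q$ large (by \eqref{fourconvolutions}), applying \cref{inverse} to $g/3$ together with the hypothesis $\norm{g}_{U^2_\perp(S)} \ge \eta$ yields some $\xi_0 \in \mathbf{F}_q^d$ with $|\widehat{g\sigma}(\xi_0)| \ge c\eta^4$.

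The next step is to unwind this Fourier coefficient. A direct change of variables gives
\[
\widehat{(\nu_B * (f\sigma))\sigma}(\xi_0) = \sum_{\eta \in \mathbf{F}_q^d} \widehat{\sigma}(\xi_0 - \eta)\,\widehat{\nu_B}(\eta)\,\widehat{f\sigma}(\eta),
\]
and separating the $\eta = \xi_0$ term and using \eqref{sphere} together with the bounds $\sum_\eta \widehat{\nu_B}(\eta) = \nu_B(0) \le \beta^{-1}$ and $|\widehat{f\sigma}(\eta)| \le 2$ to bound the tail produces
\[
\widehat{g\sigma}(\xi_0) = (1 - \widehat{\nu_B}(\xi_0))\,\widehat{f\sigma}(\xi_0) + O(\beta^{-1} q^{-1/2}).
\]
For $q$ sufficiently large this forces $|1 - \widehat{\nu_B}(\xi_0)| \cdot |\widehat{f\sigma}(\xi_0)| \ge c\eta^4$.

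Now set $\Gamma' = \Gamma \cup \{\xi_0\}$ and pick $\rho' \in [\epsilon \rho/(400|\Gamma|), \epsilon\rho/(200|\Gamma|)]$ making $B' := B(\Gamma',\rho')$ regular; this arranges $B' \prec_\epsilon B$, $|\Gamma'| \le |\Gamma|+1$, and $\rho' \ge c(|\Gamma|,\epsilon)\rho$. Since $\epsilon \le c\eta^8$ we also have $\rho' \ll \eta^4$, so the Bohr condition forces $\widehat{\mu_{B'}}(\xi_0) = 1 + O(\rho')$ and hence $\widehat{\nu_{B'}}(\xi_0) = 1 + O(\rho')$. Then
\[
|\widehat{\nu_{B'}}(\xi_0) - \widehat{\nu_B}(\xi_0)| \cdot |\widehat{f\sigma}(\xi_0)| \ge \tfrac12 |1 - \widehat{\nu_B}(\xi_0)| \cdot |\widehat{f\sigma}(\xi_0)| \ge c\eta^4,
\]
and Plancherel applied to the single frequency $\xi_0$ gives $\norm{\nu_{B'}*(f\sigma) - \nu_B*(f\sigma)}_2^2 \ge c\eta^8$. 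Feeding this into \cref{pythagoras} (and absorbing the $O(\epsilon)$ loss using $\epsilon \le c\eta^8/2$) yields the energy increment in the unweighted $L^2$ norm; a parallel Plancherel argument using \eqref{sphere} then shows $\norm{F}_{L^2(S)}^2 = \norm{F}_2^2 + O(\beta^{-2} q^{-1/2})$ for both $F = \nu_B * (f\sigma)$ and $F = \nu_{B'}*(f\sigma)$, so the gain transfers to $L^2(S)$ once $q$ is taken large enough.

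The main obstacle is the careful Fourier bookkeeping in the two approximations $\widehat{(\nu_B*(f\sigma))\sigma}(\xi_0) \approx \widehat{\nu_B}(\xi_0)\widehat{f\sigma}(\xi_0)$ and $\norm{F}_{L^2(S)}^2 \approx \norm{F}_2^2$, both of which rely on $\widehat{\sigma}$ being essentially concentrated at $0$ and on $\widehat{\nu_B}$ having small $\ell^1$ norm; both errors are polynomial in $\beta^{-1}$ and $q^{-1/2}$, so they are absorbed by the hypothesis $q \ge q(|\Gamma|, \rho, \eta, \epsilon)$. The hypothesis $\epsilon < c\eta^8$ is exactly what is needed for \cref{pythagoras} not to erase the gain, while being large enough that the permitted choice $\rho' = \Theta(\epsilon\rho/|\Gamma|)$ remains well below $\eta^4$.
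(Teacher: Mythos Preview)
Your proof is correct and follows essentially the same route as the paper: apply the inverse theorem to the residual to locate a frequency $\xi_0$, show via \eqref{sphere} that $\widehat{g\sigma}(\xi_0) \approx (1-\widehat{\nu_B}(\xi_0))\widehat{f\sigma}(\xi_0)$, adjoin $\xi_0$ to the spectrum, and convert the single-frequency lower bound into an $L^2$ energy increment via \cref{pythagoras}, finally transferring to $L^2(S)$ by the same Fourier approximation. The only cosmetic differences are that the paper writes $\widehat{\mu_B}^4$ where you write $\widehat{\nu_B}$, and it bounds the tail $\sum_\xi |\widehat{\mu_B}(\xi)|^4|\widehat{f\sigma}(\xi)|$ via Plancherel on $\widehat{\mu_B}^2$ rather than via $\nu_B(0)\le\beta^{-1}$.
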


\begin{proof}
	From the inverse theorem, $\norm{f - \nu_{B} * (f\sigma)}_{U^2_\perp(S)} \geq \eta$ implies the existence of some $\gamma \in \mathbf{F}_q^d$ with
	\[
		|\widehat{f\sigma}(\gamma) - [(\nu_{B} * (f\sigma))\sigma]^\wedge(\gamma)| \geq \eta^4 + O(q^{-1/32}).
	\]
	Expanding $[(\nu_{B} * (f\sigma))\sigma]^\wedge(\gamma)$, we can apply \eqref{sphere} for
	\begin{align*}
		[(\nu_B * (f\sigma))\sigma]^\wedge(\gamma) &= \sum_{\xi \in \mathbf{F}_q^d} \widehat{\sigma}(\xi)\widehat{\mu}_B(\gamma - \xi)^4 \widehat{f\sigma}(\gamma - \xi)\\
		&= \widehat{\mu}_B(\gamma)^4\widehat{f\sigma}(\gamma) + O\Big(q^{-1/2}\sum_{\gamma}|\widehat{\mu}_B(\xi)|^4|\widehat{f\sigma}(\xi)|\Big)
	\end{align*}
	Applying the uniform bounds $|\widehat{\mu}_B(\xi)| \leq 1$ and $\abs{\widehat{f\sigma}(\xi)} \leq 1 + O(q^{-1/2})$ together with Plancherel, we have
	\begin{equation}\label{sigmasigma}
	[(\nu_B * (f\sigma))\sigma]^\wedge(\gamma) = \widehat{\mu}_B(\gamma)^4\widehat{f\sigma}(\gamma) + O(\beta^{-1}q^{-1/2}).
	\end{equation}
	in which case we can ensure $q$ is sufficiently large and refine our inverse statement to
	\begin{equation}\label{refinedinverse}
		|\widehat{f\sigma}(\gamma)| |1 - \widehat{\mu}_B(\gamma)^4| \geq \eta^4/2.
	\end{equation}
	Set $\Gamma' = \Gamma \cup \{\gamma\}$.  It follows from \cite[Lemma 4.25]{taoVu} that for any $\epsilon \in (0,1)$ we can choose a radius $\rho' \in [\epsilon\rho / (400|\Gamma|), \epsilon\rho / (200|\Gamma|)]$ such that $B' := B(\Gamma',\rho')$ is regular, in which case $B' \prec_\epsilon B$.  From \cref{pythagoras},
	\[
		\norm{\nu_{B'} * (f\sigma)}_2^2 - \norm{\nu_B * (f\sigma)}_2^2 \geq \norm{\nu_{B'} * (f\sigma) - \nu_{B} * (f\sigma)}_2^2 + O(\epsilon),
	\]
	so we would like to provide a lower bound for
	\[
		\norm{\nu_{B'} * f\sigma - \nu_{B} * f\sigma}_2^2	= \sum_{\xi \in \mathbf{F}_q^d} \abs{\widehat{\mu}_{B'}(\xi)^4 - \widehat{\mu}_{B}(\xi)^4}^2\abs{\widehat{f\sigma}(\xi)}^2 \geq \abs{\widehat{\mu}_{B'}(\gamma)^4 - \widehat{\mu}_{B}(\gamma)^4}^2\abs{\widehat{f\sigma}(\gamma)}^2
	\]
	As $\gamma \in \Gamma'$, $\abs{\widehat{\mu}_{B'}(\gamma) - 1} \leq \rho' \leq \epsilon$.  Combining this with $|\widehat{f\sigma}(\gamma)| \leq 1 + O(q^{-1/2})$ and \eqref{refinedinverse},
	\[
		\abs{\widehat{\mu}_{B'}(\gamma)^4 - \widehat{\mu}_{B}(\gamma)^4}^2\abs{\widehat{f\sigma}(\gamma)}^2 \geq |1 - \widehat{\mu}_B(\gamma)^4|^2 |\widehat{f\sigma}(\gamma)|^2 + O(\epsilon) \geq \eta^8/4 + O(\epsilon).
	\]	
	Provided $\epsilon \leq c\eta^8$ for a sufficiently small absolute constant $c > 0$, we have managed to show
	\[
		\norm{\nu_{B'} * (f\sigma)}_2^2 - \norm{\nu_B * (f\sigma)}_2^2 \geq c\eta^8.
	\]
	For the same conclusion with $L^2(S)$ norms, we can apply \eqref{sigmasigma} for
	\begin{align*}
		\norm{\nu_B * (f\sigma)}_{L^2(S)}^2 &= \sum_{\xi \in \mathbf{F}_q^d} [\nu_B * (f\sigma)\sigma]^\wedge(\xi) \overline{\widehat{\mu}_B(\xi)^4 \widehat{f\sigma}(\xi)}\\
		&= \norm{\nu_B * (f\sigma)}_2^2 + O(\beta^{-2}q^{-1/2}).
	\end{align*}
	and ensure $q$ is sufficiently large.
\end{proof}

Next we iterate \cref{energyincrement} in order to find a Bohr set refinement for which $\nu_{B'} * (f\sigma)$ approximates $f$ well in a $U^2_\perp(S)$ sense.

\begin{prop}\label{kvn} Let $\eta \in (0,1)$, $\epsilon \in (0,c\eta^8)$ for $c > 0$ sufficiently small, $B = B(\Gamma,\rho)$ a regular Bohr set, and $f : \mathbf{F}_q^d \rightarrow \mathbf{C}$ with $|f| \leq 1$.  If $d \geq 2$ and $q \geq q(|\Gamma|,\rho,\eta,\epsilon)$, then there exists $B' = B(\Gamma', \rho')$ with $B' \prec_{\epsilon} B$, $\abs{\Gamma'} \leq C(\abs{\Gamma}, \eta)$, $\rho' \geq c(|\Gamma|, \eta, \epsilon) \rho$, and
	\[
		\norm{f - \nu_{B'} * (f \sigma)}_{U^2_\perp(S)} \leq \eta.
	\]
\end{prop}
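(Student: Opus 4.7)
The plan is to iterate Proposition \ref{energyincrement} in a standard energy-increment argument. Start with $B_0 := B$ and construct regular Bohr sets $B_0, B_1, B_2, \ldots$ as follows: if $\norm{f - \nu_{B_i} * (f\sigma)}_{U^2_\perp(S)} \leq \eta$, halt and set $B' := B_i$; otherwise, invoke Proposition \ref{energyincrement} on $B_i$ with parameters $\eta, \epsilon$ to produce $B_{i+1} \prec_\epsilon B_i$ with $\abs{\Gamma_{i+1}} \leq \abs{\Gamma_i} + 1$, $\rho_{i+1} \geq c(\abs{\Gamma_i},\epsilon)\rho_i$, and the crucial energy gain
\[
\norm{\nu_{B_{i+1}} * (f\sigma)}_{L^2(S)}^2 \geq \norm{\nu_{B_i} * (f\sigma)}_{L^2(S)}^2 + c\eta^8.
\]

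Termination is forced by a uniform upper bound on the $L^2(S)$ energy. Since $\abs{f} \leq 1$, we have the pointwise bound $\abs{\nu_{B_i} * (f\sigma)(x)} \leq \nu_{B_i} * \sigma(x) = 1 + O(\beta_i^{-1}q^{-1/2})$ by \eqref{fourconvolutions}, so provided $q$ is taken large enough to render the error terms negligible, $\norm{\nu_{B_i} * (f\sigma)}_{L^2(S)}^2 \leq 1 + o(1)$. Combined with the per-step gain of $c\eta^8$, the process must halt after at most $N \leq C\eta^{-8}$ iterations.

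The output parameters are then straightforward to read off: $\abs{\Gamma'} \leq \abs{\Gamma} + N \leq C(\abs{\Gamma},\eta)$, and since the sequence of spectra is nested and each successive radius is shrunk by at least a factor of $\epsilon/(200 \abs{\Gamma_i})$, the individual relations $B_{i+1} \prec_\epsilon B_i$ telescope to yield $B' \prec_\epsilon B$ (using that radii are non-increasing and $\abs{\Gamma_0} \leq \abs{\Gamma_i}$ throughout). Iterating the lower bound $\rho_{i+1} \geq c(\abs{\Gamma_i},\epsilon)\rho_i$ over the $N$ steps with $\abs{\Gamma_i} \leq \abs{\Gamma} + N$ produces $\rho' \geq c(\abs{\Gamma},\eta,\epsilon)\rho$, as required.

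The only mild technicality is bookkeeping in $q$: each application of Proposition \ref{energyincrement} imposes a lower bound $q \geq q(\abs{\Gamma_i},\rho_i,\eta,\epsilon)$, and one must also absorb the $O(\beta_i^{-1}q^{-1/2})$ errors above. Since $\abs{\Gamma_i}$ and $\rho_i^{-1}$ at every step are controlled in terms of $\abs{\Gamma},\rho,\eta,\epsilon$ and the iteration count is bounded a priori by $C\eta^{-8}$, a single up-front choice $q \geq q(\abs{\Gamma},\rho,\eta,\epsilon)$ suffices to support the entire construction. There is no genuine obstacle here beyond carefully tracking the compound parameter dependencies; the argument is routine once the energy-boundedness observation is in place.
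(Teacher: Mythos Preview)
Your approach is essentially identical to the paper's: iterate Proposition~\ref{energyincrement}, gaining $c\eta^8$ in $L^2(S)$ energy at each step, and terminate in $O(\eta^{-8})$ steps because the energy is bounded via \eqref{fourconvolutions}.

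There is one small bookkeeping oversight. You start the iteration at $B_0 = B$ and halt if the $U^2_\perp(S)$ condition already holds there, outputting $B' = B$; but then $B' \prec_\epsilon B$ fails, since that relation requires $\rho' \leq \epsilon\rho/(200|\Gamma|) < \rho$. Your telescoping argument for $B' \prec_\epsilon B$ implicitly assumes at least one step has been taken. The paper handles this by first passing to a regular $B_1 = B(\Gamma, \rho_1)$ with $\rho_1 \in [\epsilon\rho/(400|\Gamma|), \epsilon\rho/(200|\Gamma|)]$---same spectrum, shrunken radius---so that $B_1 \prec_\epsilon B_0$ automatically, and only then begins testing the halting condition from $j = 1$ onward. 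With that one-line fix your argument goes through verbatim.
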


\begin{proof}
	We will construct a sequence of Bohr sets
	\[
	B_{j + 1} \prec_{\epsilon} B_j \prec_{\epsilon} \cdots \prec_{\epsilon} B_1 \prec_{\epsilon} B_0
	\]
	where we write $B_j = B(\Gamma_j, \rho_j)$ and start with $B_0 = B$.  Our goal is find $j \geq 1$ for which
	\begin{equation}\label{kvnConclusion}
		\norm{f - \nu_{B_j} * (f \sigma)}_{U^2_\perp(S)} \leq \eta.
	\end{equation}
	We set $\Gamma_1 = \Gamma$ and choose $\rho_1 \in [\epsilon\rho/(400|\Gamma|),\epsilon\rho/(200|\Gamma|)]$ so that $B_1 \prec_{\epsilon} B_0$.  Starting with $j = 1$ and iterating, we are done if \eqref{kvnConclusion} is satisfied.  If \eqref{kvnConclusion} is false for $j \geq 1$, then by applying \cref{energyincrement} with $q$ sufficiently large, we obtain $B_{j + 1}$ for which $B_{j + 1} \prec_{\epsilon} B_j$, $\abs{\Gamma_{j + 1}} \leq \abs{\Gamma} + j$, $\rho_{j + 1} \geq c(\abs{\Gamma}, \eta, j)\rho$, and
	\[
		\norm{\nu_{B_{j + 1}} * (f\sigma)}_{L^2(S)}^2 \geq \norm{\nu_{B_{j}} * (f\sigma)}_{L^2(S)}^2 + c\eta^8.
	\]
	From \eqref{fourconvolutions}, we can ensure that that for every $j \geq 0$, we have
	\[
		\norm{\nu_{B_{j + 1}} * (f\sigma)}_{L^2(S)}^2 \leq 2,
	\]
	in which case it is clear that we can iterate this process at most $O(\eta^{-8})$ times before we arrive at some $j$ for which \eqref{kvnConclusion} is satisfied.
\end{proof}

Finally, we iterate \cref{kvn} to find a Bohr set $B$ for which $\nu_B * (f\sigma)$ approximates $f$ arbitrarily well in the $U^2_\perp(S)$ norm, at the cost of an $L^2$-error term.  This is necessary since shrinking $\eta$ in \cref{kvn} leads to a loss of control of the size of the approximating Bohr set.

%\begin{proof}[Proof of \cref{decomposition}]
	Letting $\epsilon > 0$, we will construct a sequence of Bohr sets
	\[
	B_{J} \prec_{\epsilon} \cdots \prec_{\epsilon} B_1 \prec_{\epsilon} B_0
	\]
	where we write $B_j = B(\Gamma_j,\rho_j)$.  We start with $\Gamma_0 = \{0\}$, $\rho_0 = 1$, and $\eta_0 = \varphi(1,1)$, in which case $B_0 = \mathbf{F}_q^d$.  For $1 \leq j \leq J$, we set $\eta_{j} = \varphi(\abs{\Gamma_{j-1}}^{-1},\rho_{j-1})$, ensure $\epsilon \in (0,c\eta_{j}^8)$, and apply \cref{kvn} to obtain $B_{j}$ with $B_{j} \prec_{\epsilon} B_{j - 1}$, $\abs{\Gamma_{j}} \leq C(|\Gamma_{j - 1}|, \eta_{j - 1})$ and $\rho_{j} \geq c(|\Gamma_{j - 1}|,\eta_{j - 1})$ such that
	\[
		\norm{f - \nu_{B_j} * (f\sigma)}_{U^2_\perp(S)} \leq \eta_j
	\]
	for sufficiently large $p$.  We claim there exists some $j$ for which
	\begin{equation}\label{l2claim}
		\norm{\nu_{B_{j + 1}} * (f\sigma) - \nu_{B_j} * (f\sigma)}_{L^2(S)}^2 \leq \eta^2.
	\end{equation}
	Using \cref{pythagoras}, we can insist $\epsilon$ is sufficiently small with respect to $\eta$ to guarantee
	\[
		\norm{\nu_{B_{j + 1}} * (f\sigma) - \nu_{B_j} * (f\sigma)}_{L^2(S)}^2 \leq \norm{\nu_{B_{j + 1}} * (f\sigma)}_{L^2(S)}^2 - \norm{\nu_{B_j} * (f\sigma)}_{L^2(S)}^2 + \eta^2/2.
	\]
	If $\norm{\nu_{B_{j + 1}} * (f\sigma)}_{L^2(S)}^2 \leq \norm{\nu_{B_j} * (f\sigma)}_{L^2(S)}^2$, then  \eqref{l2claim} would follow.  Suppose instead that \[
		\norm{\nu_{B_{j + 1}} * (f\sigma)}_{L^2(S)}^2 > \norm{\nu_{B_j} * (f\sigma)}_{L^2(S)}^2
		\]
		for all $j$.  From \eqref{fourconvolutions}, $\norm{\nu_{B_j} * (f\sigma)}_{L^2(S)}^2 \leq 2$ for sufficiently large $q$, in which case the $\norm{\nu_{B_j} * (f\sigma)}_{L^2(S)}^2$ form a bounded increasing sequence.  It follows that we can take $J = O(\eta^{-2})$ and conclude \eqref{l2claim} for some now fixed $1 \leq j \leq J$.  We are then finished by setting $B = B_j$ and
		\begin{align*}
			f_1 &= \nu_{B_j} * (f\sigma)\\
			f_2 &= f - \nu_{B_{j + 1}} * (f\sigma)\\
			f_3 &= \nu_{B_{j + 1}} * (f\sigma) - \nu_{B_j} * (f\sigma).
		\end{align*}
This completes the proof of \cref{decomposition}.\qed
%\end{proof}

%\smallskip

\section*{Appendix: Necessity of the Spherical Condition}

By an isometry of $\mathbf{F}_q^d$ we mean a linear map $U:\mathbf{F}_q^d\to\mathbf{F}_q^d$ so that $|U(x)|^2=|x|^2$ for all $x$. It is easy to see that $U(x)\cdot U(y)=x\cdot y$ for all $x$ and $y$, hence $U^TU=UU^T=I$. We call a set $X=\{x_0,\ldots,x_k\}$ \emph{non-degenerate} if $V\cap V^\bot=\{0\}$ for the subspace $V=\Span(X-X)$.

\begin{lem} Suppose $X$ is a non-degenerate $k+1$-point configuration and  $Y=\{y_0,\ldots,y_k\}$ is isometric to $X$. Then there exists a vector $z$ and an isometry $U$ of $\mathbf{F}_q^d$ so that $Y=z+U(X)$.
\end{lem}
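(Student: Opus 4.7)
My plan is to reduce everything to linear algebra by translating the configurations to the origin, then construct a linear isometry between the spans using the Gram matrix, and finally extend it via Witt's theorem.

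First I would translate: setting $v_i := x_i - x_0$ and $w_i := y_i - y_0$, the statement $Y = z + U(X)$ with $z = y_0 - U(x_0)$ is equivalent to producing a linear isometry $U$ of $\mathbf{F}_q^d$ with $U(v_i) = w_i$ for all $0 \leq i \leq k$. The isometry hypothesis $|x_i - x_j|^2 = |y_i - y_j|^2$ combined with $|v_i|^2 = |x_i - x_0|^2 = |y_i - y_0|^2 = |w_i|^2$ and the polarization identity $2(a \cdot b) = |a|^2 + |b|^2 - |a - b|^2$ then yields $v_i \cdot v_j = w_i \cdot w_j$ for all $i, j$. In other words, the Gram matrices of the tuples $(v_1, \ldots, v_k)$ and $(w_1, \ldots, w_k)$ coincide.

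Next I would construct a linear isometry $U_0 : V \to W$ between $V = \Span(v_1, \ldots, v_k)$ and $W = \Span(w_1, \ldots, w_k)$. The non-degeneracy hypothesis $V \cap V^\perp = \{0\}$ is equivalent to saying that the Gram matrix of any basis of $V$ is invertible. Extract a basis $v_{i_1}, \ldots, v_{i_m}$ of $V$ from the $v_i$'s; its Gram matrix $G = (v_{i_a} \cdot v_{i_b})_{a,b}$ is invertible. Since $G$ also equals the Gram matrix of $w_{i_1}, \ldots, w_{i_m}$, these vectors are linearly independent, and an analogous argument using the extended Gram matrix shows that each $w_i$ lies in their span with the same coefficients as $v_i$ has in $v_{i_1}, \ldots, v_{i_m}$: both coefficient vectors are the unique solution to $G c = ((v_{i_a} \cdot v_i))_a = ((w_{i_a} \cdot w_i))_a$. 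Thus setting $U_0(v_{i_a}) = w_{i_a}$ and extending linearly produces a well-defined linear map with $U_0(v_i) = w_i$ for every $i$, which preserves the bilinear form by construction.

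Finally I would extend $U_0$ to all of $\mathbf{F}_q^d$ by invoking Witt's extension theorem \cite{witt37}. Since $V$ is non-degenerate, $U_0(V) = W$ is also non-degenerate and we have orthogonal decompositions $\mathbf{F}_q^d = V \oplus V^\perp = W \oplus W^\perp$; Witt's theorem guarantees that $V^\perp$ and $W^\perp$ are isometric as quadratic spaces, and any such isometry glued with $U_0$ yields the desired global isometry $U$. Setting $z = y_0 - U(x_0)$ finishes the proof. The main obstacle is the well-definedness of $U_0$ — specifically, that linear relations among the $v_i$ transfer identically to the $w_i$ — which is precisely the point at which the non-degeneracy hypothesis is used in an essential way; the rest reduces to a black-box invocation of Witt's theorem.
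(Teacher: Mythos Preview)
Your proof is correct and follows essentially the same route as the paper: translate to the origin, use equality of Gram matrices to build a linear isometry between $V=\Span(X-X)$ and $W=\Span(Y-Y)$, then invoke Witt's theorem to extend globally. You are simply more explicit than the paper about why the map $v_i\mapsto w_i$ is well-defined (the Gram-matrix argument transferring linear relations), and you phrase the final step via Witt cancellation plus gluing on orthogonal complements rather than citing Witt's extension theorem directly, but these are cosmetic differences.
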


\begin{proof} By performing a translation we can assume $x_0=y_0=0$. Assume that the vertices are labeled so that $|y_i-y_j|^2=|x_i-x_j|^2$ for all $0\leq i\leq j\leq k$.  Let $V$ and $W$ be the subspace spanned by the sets $X$ and $Y$ respectively. Since $X$ and hence $Y$ are non-degenerate, we have that the map $\phi:x_i\to y_i,\ 1\leq i\leq k$ extends to an isometry $\Phi:V\to W$. By Witt's extension theorem \cite{witt37} there is an orthogonal transformation $U:\mathbf{F}_q^d\to \mathbf{F}_q^d$ extending the map $\Phi$. Clearly $U(X)=Y$.
\end{proof}

Let us reformulate the property that a set $X$ is non-degenerate. Assume $X=\{0,x_1,\dots x_k\}$ and $X'=\{x_1,\ldots,x_k\}$ is a linearly independent set such that $\Span(X')=\Span(X)$. Let $M$ be the symmetric $l\times l$ matrix with entries $m_{ij}=x_i\cdot x_j$ for $1\leq i,\,j\leq l$.

\begin{lem} The set $X$ is non-degenerate if and only if the associated inner product matrix $M$ has maximal rank.
\end{lem}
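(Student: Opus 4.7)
The plan is to reformulate the condition $V \cap V^\perp = \{0\}$ entirely in coordinates with respect to the basis $X' = \{x_1, \ldots, x_k\}$, at which point the matrix $M$ will appear naturally as the map sending a coordinate vector to the tuple of its pairings with the basis elements.

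First I would observe that since $0 \in X$, the ambient subspace is simply
\[
V = \Span(X - X) = \Span(X) = \Span(X'),
\]
and by the linear independence hypothesis, every $v \in V$ has a unique representation $v = \sum_{i=1}^k c_i x_i$ with $c = (c_1, \ldots, c_k)^T \in \mathbf{F}_q^k$.

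Next, I would rewrite the condition $v \in V^\perp$. Since $V$ is spanned by $x_1, \ldots, x_k$, we have $v \in V^\perp$ if and only if $v \cdot x_j = 0$ for every $1 \leq j \leq k$. Expanding $v$ in coordinates gives
\[
v \cdot x_j = \sum_{i=1}^k c_i (x_i \cdot x_j) = (Mc)_j,
\]
so $v \in V^\perp$ is equivalent to $Mc = 0$. Combined with the fact that $v = 0$ corresponds to $c = 0$ (by linear independence of $X'$), this yields the equivalence
\[
V \cap V^\perp = \{0\} \iff \ker(M) = \{0\} \iff \rank(M) = k.
\]

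I expect no serious obstacle here: the argument is pure linear algebra once one picks the basis $X'$ and identifies the Gram matrix $M$ with the composition of "coordinates $\to$ vector in $V$" with "$v \mapsto (v \cdot x_j)_j$". The only minor subtlety worth flagging in the write-up is that the assumption $\Span(X') = \Span(X)$ lets us use $X'$ both as a spanning set (to test orthogonality) and as a basis (to coordinatize $V$), so the Gram matrix $M$ simultaneously represents the restriction of the dot product to $V$ in this basis; non-degeneracy of the bilinear form restricted to $V$ is then exactly the invertibility of $M$.
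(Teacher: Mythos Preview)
Your argument is correct and follows essentially the same route as the paper: write $v\in V$ in coordinates with respect to the basis $X'$, observe that $v\cdot x_j=(Mc)_j$, and conclude $V\cap V^\perp=\{0\}\iff\ker M=\{0\}$. If anything, your write-up is slightly more explicit than the paper's about using the linear independence of $X'$ to ensure $v=0\iff c=0$.
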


\begin{proof} Clearly $X$ is non-degenerate if and only if $X'$ is linearly independent. Let $V= \Span (X')$ and assume that $0\neq v\in V$ such that $v\bot V$. Then $v=\sum_{i=1}^l b_i x_i$ and $v\cdot x_j=\sum_{i=1}^l m_{ji} b_i =  Mb_j=0$ for all $1\leq j\leq l$. Thus $Mb=0$ for a non-zero vector $b=(b_j)$ and $M$ has rank less than $l$. Conversely if $\rank(M)<l$ then $Mb=0$ for some $b=(b_j)\neq 0$ and the vector $v=\sum_i b_i x_i$ is orthogonal to $V=\Span(X')=\Span(X)$.
\end{proof}

Note that the inner product matrix drops rank if some non-trivial algebraic relations between the distances $|x_i-x_j|^2$ of the points $x_i,\,x_j$ of $X$, hence generic configurations are non-degenerate.  Next, we show that the spherical condition is necessary over finite fields as well, at least for non-degenerate sets. We follow the proof in \cite{egmrss} with some minor modifications. First we give the following characterization of spherical sets.

\begin{lem}\label{appendixspherical} Let $X=\{x_0,x_1,\ldots,x_k\}\subseteq \mathbf{F}_q^d$. Then $X$ is spherical if and only if the following holds. For every $c_0,c_1,\ldots,c_k\in \mathbf{F}_q$ if
\[(I)\quad\sum_{i=0}^k c_i=0,\ \ \textit{ and}\ \ \ \ \sum_{i=0}^k c_i x_i=0,\ \ \textit{then also}\]  \[(II)\quad\sum_{i=0}^k c_i |x_i|^2=0.\]
\end{lem}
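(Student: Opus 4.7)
The plan is to establish the two directions of the biconditional separately, with the forward direction being essentially immediate from the spherical equation and the backward direction requiring a construction of the center $u$.

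For the forward direction, suppose $X \subseteq S_\lambda(u)$, so that $|x_i - u|^2 = \lambda$ for each $i$. Expanding this yields the key identity $|x_i|^2 = 2\, x_i \cdot u + (\lambda - |u|^2)$, which expresses $|x_i|^2$ as the sum of a linear function of $x_i$ plus a constant. Then for any $(c_0,\ldots,c_k)$ satisfying $(I)$, linearity immediately gives
\[
\sum_{i=0}^k c_i |x_i|^2 = 2 u \cdot \Big(\sum_{i=0}^k c_i x_i\Big) + (\lambda - |u|^2)\sum_{i=0}^k c_i = 0,
\]
which is $(II)$. This direction uses no special hypothesis.

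For the backward direction, after translating we may assume $x_0 = 0$, and the goal reduces to finding $u \in \mathbf{F}_q^d$ with $x_i \cdot u = |x_i|^2/2$ for $i=1,\ldots,k$, since then $X \subseteq S_{|u|^2}(u)$. Pick an index set $I \subseteq \{1,\ldots,k\}$ for which $\{x_i : i \in I\}$ is a basis of $\Span(X)$. Because these vectors are linearly independent, the linear map $\mathbf{F}_q^d \to \mathbf{F}_q^{|I|}$ sending $u \mapsto (x_i \cdot u)_{i \in I}$ is surjective, so there exists $u$ solving the restricted system $x_i \cdot u = |x_i|^2/2$ for $i \in I$.

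The main (and only substantive) step is then to verify this same identity automatically holds for the remaining indices $j \notin I$. For such $j$ write $x_j = \sum_{i \in I} a_{ji} x_i$ and package this dependency into a coefficient vector satisfying $(I)$: set $c_j = 1$, $c_i = -a_{ji}$ for $i \in I$, $c_0 = \sum_{i \in I} a_{ji} - 1$, and all other coefficients zero. A direct check shows $\sum c_m = 0$ and $\sum c_m x_m = 0$, so hypothesis $(II)$ delivers $|x_j|^2 = \sum_{i \in I} a_{ji} |x_i|^2$ (the $x_0 = 0$ term drops out). Taking the inner product of the dependency relation with the already-constructed $u$ then gives $x_j \cdot u = \sum a_{ji} |x_i|^2/2 = |x_j|^2/2$, completing the argument. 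I expect the only minor subtlety to be correctly choosing $c_0$ so that the sum-to-zero condition $(I)$ is satisfied while the $c_0 x_0$ term vanishes in both the sum $\sum c_m x_m$ and the sum $\sum c_m |x_m|^2$; this is exactly why the translation to $x_0 = 0$ is made at the outset.
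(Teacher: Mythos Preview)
Your proof is correct, and the forward direction matches the paper's argument essentially verbatim. For the backward direction, however, you take a genuinely different route. The paper argues by contrapositive: assuming $X$ is not spherical, it passes to a minimal non-spherical subset, invokes the fact that simplices are spherical to force an affine dependency among the $x_i$, and then uses that dependency together with a sphere through all but one point to exhibit coefficients satisfying (I) but not (II). Your argument is instead a direct construction of the center: after translating to $x_0=0$, you solve the linear system $x_i\cdot u=|x_i|^2/2$ on a basis of $\Span(X)$ (surjectivity of $u\mapsto(x_i\cdot u)_{i\in I}$ following from linear independence of the rows), and then use hypothesis (II) to propagate the identity to the remaining points. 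Your approach is more self-contained, since it does not presuppose that simplices are spherical---indeed it proves that fact along the way---and it yields the center explicitly; the paper's contrapositive argument is the classical one from the Euclidean Ramsey literature and has the advantage of identifying precisely which affine dependency witnesses the failure of (II).
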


\begin{proof} Suppose $X$ is spherical, that is $|x_i-z|^2=r$ for $0\leq i\leq k$ for some $z\in\mathbf{F}_q^d$ and $r\in \mathbf{F}_q$. Then $|x_i|^2=r-|z|^2+2x_i\cdot z$ for all $i$, hence
\[\sum_{i=0}^k c_i |x_i|^2 = \sum_{i=0}^k c_i (|z|^2+r) + 2\sum_{i=0}^k c_i x_i\cdot z =0.\]
Conversely, assume $X$ is not spherical. We show that there exists $c_0,\ldots,c_k$ satisfying (I) but not (II). One may assume that $X$ is minimal that is $X'$ is spherical for $X'\subseteq X$, $X'\neq X$. Since a simplex is spherical there is a non-trivial linear combination $\sum_{i=1}^k a_i (x_i-x_0)=0$ and by reindexing the vertices one may assume $a_k\neq 0$. Taking $X'=\{x_0,\ldots,x_{k-1}\}$ one has $|x_i-z|^2=r$ for $1\leq i<k$ but $|x_k-z|^2-r=b\neq 0$. Then $|x_i|^2-|x_0|^2= 2(x_i-x_0)\cdot z$ for $1\leq i<k$ and $|x_k|^2-|x_0|^2= 2(x_k-x_0)\cdot z\,+b$. Thus
\[\sum_{i=1}^k a_i (|x_i|^2-|x_0|^2) = 2 \sum_{i=1}^k a_k (x_i-x_0)\cdot z +a_k b=a_k b\neq 0.\]
Taking $c_0=-\sum_{i=1}^k a_i$ and $c_i=a_i$ for $1\leq i\leq k$ the claim follows.
\end{proof}

\begin{lem} Let $X\subseteq \mathbf{F}_q^d$ be a non-spherical set of $k$ elements. Then there exists a set $A\subseteq \mathbf{F}_q^d$ of size $|A|\geq c_k q^d$ which does not contain any set $Y$ of the form $Y=z+U(X)$, where $c_k>0$ is a constant depending only on $k$.
\end{lem}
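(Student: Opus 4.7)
Following the approach of \cite{egmrss} in the Euclidean setting, my strategy is to construct $A$ as the preimage $\{x : |x|^2 \in T\}$ of a suitably chosen $T \subseteq \mathbf{F}_q$ under the norm map.  The starting point is \cref{appendixspherical}: non-sphericity of $X$ yields coefficients $c_0, \ldots, c_{k-1} \in \mathbf{F}_q$ with $\sum_i c_i = 0$, $\sum_i c_i x_i = 0$, and $s := \sum_i c_i |x_i|^2 \neq 0$.

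For any isometric copy $Y = z + U(X) = \{y_0, \ldots, y_{k-1}\}$, expanding $|y_i|^2 = |z|^2 + 2 z \cdot U(x_i) + |x_i|^2$ and summing with weights $c_i$ gives
\[
\sum_i c_i |y_i|^2 \;=\; |z|^2 \sum_i c_i \,+\, 2 z \cdot U\bigl(\sum_i c_i x_i\bigr) \,+\, \sum_i c_i |x_i|^2 \;=\; s,
\]
since the first two terms vanish.  Thus every isometric copy of $X$ produces a solution of the affine linear equation $\sum_i c_i t_i = s$ in the squared norms $t_i = |y_i|^2$.  It therefore suffices to construct a set $T \subseteq \mathbf{F}_q$ of density at least $c_k$ containing no $(t_0, \ldots, t_{k-1}) \in T^k$ with $\sum_i c_i t_i = s$: setting $A := \{x \in \mathbf{F}_q^d : |x|^2 \in T\}$, the Fourier asymptotic \eqref{sphere} gives $|A| = |T| q^{d-1} + O(q^{d - 1/2}) \geq (c_k/2) q^d$ for $q$ large, and by construction $A$ contains no isometric copy of $X$.

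The main obstacle is constructing this $T$.  My plan is to identify $\mathbf{F}_p$ with $\{0, 1, \ldots, p-1\}$ (reducing the prime-power case via the $\mathbf{F}_p$-subfield structure of $\mathbf{F}_q$) and take $T$ to be an interval $[jp/r, (j+1)p/r) \subset \mathbf{F}_p$ for suitable integers $j, r$.  The vanishing $\sum c_i = 0$ collapses the equation within any such interval: writing $t_i = jp/r + t_i'$ with $t_i' \in [0, p/r)$, one has $\sum_i c_i t_i = \sum_i c_i t_i'$, a quantity whose integer representative lies in a range of width $O((p/r) \sum_i |c_i'|)$, where $c_i' \in (-p/2, p/2]$ denotes the representative of $c_i$.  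A rescaling $(c_i, s) \mapsto (\lambda c_i, \lambda s)$ by a carefully chosen $\lambda \in \mathbf{F}_p^*$ --- which leaves the set of solutions unchanged --- then pushes the representative of $\lambda s$ close to $p/2$, outside this attainable range, so that the interval is solution-free.

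The delicate point, making this argument yield $r$ (and hence density $1/r$) depending only on $k$, is to select the relation $(c_i)$ from the lattice
\[
\Lambda \;:=\; \bigl\{(c_i) \in \mathbf{Z}^k : \textstyle\sum c_i \equiv 0 \equiv \sum c_i x_i \pmod p\bigr\} \;\supseteq\; p \mathbf{Z}^k
\]
so that both the sum $\sum_i |c_i'|$ and the distance of $\lambda s$ from $0$ modulo $p$ can be simultaneously controlled.  Minkowski's theorem applied to $\Lambda$ produces a short vector, and pairing this with the choice of $\lambda$ (a simultaneous Dirichlet-type adjustment) is the ``minor modification" of the Euclidean argument alluded to in the paper; this is the step I expect to demand the most care.
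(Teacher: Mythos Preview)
Your reduction is correct and matches the paper exactly: extract $(c_i)$ from \cref{appendixspherical} with $\sum c_i=0$, $\sum c_i x_i=0$, $s:=\sum c_i|x_i|^2\neq 0$; verify $\sum c_i|y_i|^2=s$ for every $Y=z+U(X)$; and set $A=\{x:|x|^2\in T\}$ for a $T\subseteq\mathbf{F}_q$ containing no solution of $\sum c_it_i=s$.

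The gap is in your construction of $T$.  An interval of width $p/r$ avoids solutions only when $r\gtrsim\sum_i|c_i'|$, so you need the integer representatives to satisfy $\sum_i|c_i'|=O_k(1)$.  Minkowski on $\Lambda$ cannot deliver this: in the worst case $X$ spans $k-2$ dimensions affinely, the solution space of (I) in $\mathbf{F}_p^k$ is a single line, $\Lambda$ has index $p^{k-1}$ in $\mathbf{Z}^k$, and the shortest nonzero lattice vector has $\ell_\infty$-norm of order $p^{(k-1)/k}$.  Your interval then has density $\lesssim p^{-(k-1)/k}\to 0$, not a constant $c_k$.  Rescaling by $\lambda\in\mathbf{F}_p^*$ does not help: it permutes the line (mod $p$) and leaves the shortest-vector length unchanged, while the extra demand that $\lambda s$ land near $p/2$ only adds a constraint.

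The paper's ``minor modification'' is to replace the interval by the Bohr set $T=B(\{c_0,\ldots,c_{k-1}\},\rho)\subseteq\mathbf{F}_q$ with $\rho=\tfrac{1}{4k}$, after first rescaling the $c_i$ so that $|\chi(s)-1|\geq 1/2$.  The size bound \eqref{bohrsetsize} gives $|T|\geq(\rho/2\pi)^kq$ with \emph{no dependence on the magnitudes of the $c_i$}, and if every $t_i\in T$ then $|\chi(c_it_i)-1|\leq\rho$ for each $i$, so $|\chi(\sum c_it_i)-1|\leq k\rho<1/2$, contradicting $\sum c_it_i=s$.  In short, a Bohr set with $k$ frequencies is the correct finite-field substitute for the interval; forcing a single-frequency interval is precisely what creates the need for short $c_i$'s that you cannot obtain.
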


\begin{proof} Let $t\in \mathbf{F}_q^\ast$ such that $|\chi(t)-1|\geq 1/2$, $\chi$ being a non-trivial character. Such a $t$ exists as $\sum_t \chi(t)=0$. If $X$ is not spherical then, by scaling, there exists $c_0,\ldots,c_k$ so that (I) holds and $\sum_{i=0} c_i |x_i|^2=t$. If $Y=z+U(X)$ then $\sum_{i=0}^k c_i y_i=0$ and moreover
\[\sum_{i=0}^k c_i |y_i|^2 = \sum_{i=0}^k c_i (|z|^2+2z\cdot U(x_i)+|x_i|^2)=t.\]
Let $B=B(\Gamma,\rho)$ be the Bohr set with $\Gamma=\{c_0,\ldots,c_k\}$ and $\rho=\frac{1}{4(k+1)}$. By the estimate \eqref{bohrsetsize}, $|B|\geq c_k q$, for some constant $c_k>0$ depending only on $k$. Define $A=\{x\in\mathbf{F}_q^d:\ |x|^2\in B\}$. Since the number of solutions to $|x|^2=b$ is at least $\frac{1}{2}q^{d-1}$ uniformly for $b\in \mathbf{F}_q$ for $q\geq q_0$, we have that $|A|\geq \frac{c_k}{2}q^d$. We show that $Y=z+U(x)$ for some $z\in \mathbf{F}_q^d$ and an orthogonal transformation $U$ then $Y\subsetneq A$. Indeed if $Y\subseteq A$ then $|\chi (c_i |y_i|^2)-1|\leq \frac{1}{4(k+1)}$ for $0\leq i\leq k+1$. It follows
\[|\chi(\sum_{i=0}^k c_i |y_i|^2)-1|=|\prod_{i=0}^k \chi(c_i|y_i|^2)-1|\leq \sum_{i=0}^k |\chi(c_i|y_i|^2)-1|\leq 1/4.\]
This implies $|\chi(t)-1|\leq 1/4$ contradicting our choice of $t$.
\end{proof}

\bigskip

%\bibliography{ff-spherical}{}
%\bibliographystyle{amsplain}

\providecommand{\bysame}{\leavevmode\hbox to3em{\hrulefill}\thinspace}
\providecommand{\MR}{\relax\ifhmode\unskip\space\fi MR }
% \MRhref is called by the amsart/book/proc definition of \MR.
\providecommand{\MRhref}[2]{%
  \href{http://www.ams.org/mathscinet-getitem?mr=#1}{#2}
}
\providecommand{\href}[2]{#2}

\end{document}